\newcommand \fk[1]{{{\mathfrak #1}}}
\newcommand \C[1]{{\mathcal #1}}
\newcommand \wti[1]{{\widetilde {#1}}}
\newcommand\fg{\mathfrak g}
\newcommand \bC{{\mathbb C}}
\newcommand \bR{{\mathbb R}}
\newcommand \bZ{{\mathbb Z}}
\newcommand\CC{{\C C}}
\newcommand\CP{{\C P}}
\newcommand\ep{{\epsilon}}
\newcommand\om{{\omega}}
\newcommand\al{{\alpha}}
\newcommand\fh{{\mathfrak h}}
\newcommand\ft{{\mathfrak t}}
\newtheorem{theorem}{Theorem}[subsection]
\newtheorem{corollary}[theorem]{Corollary}
\newtheorem{lemma}[theorem]{Lemma}
\newtheorem{proposition}[theorem]{Proposition}
\newtheorem{definition}[theorem]{Definition}
\newtheorem{remark}[theorem]{Remark}
\newcommand\Hom{\operatorname{Hom}}
\newcommand\Ind{\operatorname{Ind}}
\newcommand\tr{\operatorname{tr}}
\newcommand\pr{\operatorname{pr}}
\newcommand\triv{\mathsf{triv}}
\newcommand\sgn{\mathsf{sgn}}
\numberwithin{equation}{subsection}
\begin{document}

\title[Spin representations of Weyl groups]{Spin representations of Weyl groups and the Springer correspondence} 

%\thanks{MSC 22E50. This research was supported by NSF-DMS 0968065 and NSA-AMS 081022.}
\author{Dan Ciubotaru}
        \address[D. Ciubotaru]{Dept. of Mathematics\\ University of
          Utah\\ Salt Lake City, UT 84112}
        \email{ciubo@math.utah.edu}

%\date\today

\begin{abstract}
We give a common framework for the classification of projective spin irreducible representations of a Weyl group, modeled after the Springer correspondence for ordinary representations.  
\end{abstract}

\maketitle

\setcounter{tocdepth}{1}

\section{Introduction}\label{sec:0}

Let $\Phi=(V,R,V^\vee,R^\vee)$ be a semisimple crystallographic $\bR$-root system (see
\S\ref{sec:1}) with Weyl group $W$ and a choice of positive roots $R^+$. Assume that $V$ is endowed with a
$W$-invariant inner product $\langle~,~\rangle$, and define the dual inner product on $V^\vee$, denoted by $\langle~,~\rangle$ as well. 
 The Weyl group $W$ is a finite subgroup of
$\mathsf{O}(V)$ and therefore, one can consider the double cover $\wti
W$ of $W$ in $\mathsf{Pin}(V)$, a double cover of
$\mathsf{O}(V)$. A classical problem is to classify the irreducible
genuine $\wti W$-representations (i.e., the representations that do not
factor through $W$). This is known case
by case, and goes back to Schur (\cite{Sc}), in the case of $\wti S_n$, and was
completed about $30$ years ago for the other root systems  by Morris, Read, Stembridge and others (see
\cite{Mo,Mo2, Re,St} and the references therein).  In this paper, we attempt to
unify these classifications in a common
framework, based on Springer theory (\cite{Sp,L}) for ordinary
$W$-representations. Our point of view is motivated by the
construction of the Dirac operator for graded affine Hecke algebras
(\cite{BCT}).

The group $\wti W$ is generated by certain elements $\wti s_\al$ of order $4$,
 $\al\in R^+$, with relations similar to the Coxeter
presentation of $W$ (see \S\ref{Wspin}). Let $\check\al\in R^\vee$ denote the coroot corresponding to the root $\al\in R.$ The starting observation is the
existence of a remarkable central element
$\Omega_{\wti W}\in \bC[\wti W]$ (see \S\ref{casimir}):
\begin{equation}\label{omintro}
\Omega_{\wti W}=\sum_{\substack{\al>0,\beta>0\\s_\al(\beta)<0}} 
 |\check\al| |\check\beta| ~\wti s_\al \wti s_\beta;
\end{equation}
this element, rather surprisingly, behaves like an analogue of the Casimir element for
a Lie algebra. Every irreducible $\wti W$-representation $\wti\sigma$
acts by a scalar $\wti\sigma(\Omega_{\wti W})$ on $\Omega_{\wti W}.$
For example, $\wti W$ has one (when $\dim V$ is even) and two (when
$\dim V$ is odd) distinguished irreducible representations, which we
call spin modules (\S\ref{s:spinmod}). If $S$ is one such spin module,
then $S(\Omega_{\wti W})=\langle2\check\rho,2\check\rho\rangle$, where $\check\rho=\frac
12\sum_{\al\in R^+}\check\al.$

Before stating the main result, we need to introduce more notation. Let $\fg$ be the complex semisimple Lie algebra with root system $\Phi$ and Cartan subalgebra $\fh=V^\vee\otimes_\bR \bC$, and let
$G$ be the simply connected Lie group with Lie algebra $\fg$. Extend the inner product from $V^\vee$ to $\fh.$ Let us denote by $\C T(G)$ the set of
$G$-conjugacy classes of Jacobson-Morozov triples $(e,h,f)$ in $\fg$. We set:
\begin{equation}
\C T_0(G)=\{[(e,h,f)]\in \C T(G): \text{ the centralizer of }\{e,h,f\} \text{
  in }\fg\text{ is a toral subalgebra}\}.
\end{equation}
For example, if $G=\mathsf{SL}(n)$, the nilpotent elements $e$ that occur in $\C T_0(G)$ are those whose Jordan canonical form has all parts distinct. In general, every distinguished nilpotent element $e$ (in the sense of Bala-Carter \cite{Ca2}) appears in $\C T_0(G)$.

For every class in $\C T(G)$, we may (and will) choose a representative $(e,h,f)$ such that $h\in\fh.$
For every
nilpotent element $e$, let $A(e)$ denote the A-group in $G$, and let
$\widehat {A(e)}_0$ denote the set of representations of $A(e)$ of
Springer type. For every $\phi\in \widehat{A(e)}_0$, let
$\sigma_{(e,\phi)}$ be the associated Springer representation (see \S\ref{ellW}).
Normalize the Springer correspondence so that
$\sigma_{0,\text{triv}}=\sgn$.

There is an equivalence relation $\sim$ on the space $\widehat{\wti
  W}_{\mathsf{gen}}$ of genuine irreducible $\wti W$-representations:
$\wti\sigma\sim\wti\sigma\otimes\sgn$; here, $\sgn$ is the sign $W$-representation.

\begin{theorem}\label{t:intro} There is a surjective map
\begin{equation}
\Psi:\widehat{\wti W}_{\mathsf{gen}} \longrightarrow \C T_0(G),
\end{equation}
with the following properties:
\begin{enumerate}
\item If $\Psi(\wti\sigma)=[(e,h,f)]$, then we have
\begin{equation}
\wti\sigma(\Omega_{\wti W})=\langle h,h\rangle,
\end{equation}
where $\Omega_{\wti W}$ is as in (\ref{omintro}).
\item   Let $(e,h,f)\in \C T_0(G)$ be given. For every  Springer representation $\sigma_{(e,\phi)}$,
  $\phi\in\widehat {A(e)}_0$, and every spin $\wti W$-module $S$,
  there exists $\wti \sigma\in \Psi^{-1}[(e,h,f)]$ such that $\wti\sigma$ appears with
  nonzero multiplicity in the tensor product
  $\sigma_{(e,\phi)}\otimes S$. Conversely, for every $\wti\sigma\in \Psi^{-1}[(e,h,f)]$, there exists a
  spin $\wti W$-module $S$ and a Springer representation
  $\sigma_{(e,\phi)}$, such that $\wti\sigma$ is contained in
  $\sigma_{(e,\phi)}\otimes S.$
\item If $e$ is distinguished, then properties (1) and (2) above
  determine a  bijection:
\begin{equation}\label{dist}
\Psi^{-1}([e,h,f])/_\sim \longleftrightarrow \{\sigma_{e,\phi}: \phi\in \widehat {A(e)}_0\}.
\end{equation}
\end{enumerate}
\end{theorem}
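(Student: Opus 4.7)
The plan is to assemble $\Psi$ from three ingredients: the observation that every genuine $\wti\sigma$ sits inside some tensor product $\sigma\otimes S$ of an ordinary $W$-representation and a spin module; the Springer correspondence $\sigma\leftrightarrow(e,\phi)$ on the $W$-side; and the Dirac-operator/Casimir formalism of \cite{BCT}, which both forces compatibility with the scalar $\wti\sigma(\Omega_{\wti W})$ and controls well-definedness.

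The concrete steps I would carry out are as follows. A character/dimension count first establishes that every $\wti\sigma\in\widehat{\wti W}_{\mathsf{gen}}$ appears in some $\sigma\otimes S$; equivalently, $\bigoplus_{\sigma,S}\sigma\otimes S$ exhausts the genuine regular $\wti W$-representation. Next, from \cite{BCT}, the Weyl-group Dirac element $D\in\bC[\wti W]\otimes C(V)$ satisfies a squared formula whose upshot is that for any $\wti\sigma\subset\sigma\otimes S$ lying in $\ker D$, one has $\wti\sigma(\Omega_{\wti W})=\sigma(\Omega_W)$, where $\Omega_W\in\bC[W]$ is obtained from $\Omega_{\wti W}$ by replacing each $\wti s_\al\wti s_\beta$ with $s_\al s_\beta$. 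Third, for a Springer $\sigma_{(e,\phi)}$, a direct computation gives $\sigma_{(e,\phi)}(\Omega_W)=\langle h,h\rangle$, where $(e,h,f)$ is a Jacobson-Morozov triple with $h\in\fh$.

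With these in hand, I would define $\Psi(\wti\sigma)=[(e,h,f)]$ by selecting any $\phi,S$ for which $\wti\sigma$ is a Dirac-cohomology constituent of $\sigma_{(e,\phi)}\otimes S$; property (1) is then immediate from the three steps above. The containment $\Psi(\wti\sigma)\in\C T_0(G)$ would follow by observing that if the centralizer of $\{e,h,f\}$ failed to be toral, the triple would sit as a distinguished one inside a proper Levi $L$, and $\wti\sigma$ could be obtained by induction from the corresponding double cover of $W_L$; one then replaces the orbit by the Bala-Carter distinguished triple inside $L$, which lives in $\C T_0(G)$. Property (2) has two halves: for the first, one shows that the Dirac cohomology of $\sigma_{(e,\phi)}\otimes S$ is nonzero (an Euler-characteristic argument), so some $\wti\sigma$ above $[(e,h,f)]$ must appear; the converse is essentially the definition of $\Psi$. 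For property (3), $A(e)$ parametrizes the Springer-type representations and also acts on the fibre $\Psi^{-1}[(e,h,f)]/{\sim}$ through its action on Dirac cohomology, and a character comparison yields the bijection.

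The main obstacle I expect is establishing that the image of $\Psi$ is exactly $\C T_0(G)$ and executing the bijection in property (3): both reduce to fine nonvanishing statements for the Dirac cohomology of specific $\sigma_{(e,\phi)}\otimes S$, which the abstract formalism does not deliver automatically. In practice I anticipate a case-by-case verification against the explicit parametrizations of Morris, Read, and Stembridge in the classical types and a direct computation for the exceptionals.
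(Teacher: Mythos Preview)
Your proposal and the paper diverge more than you may expect. The paper contains no abstract or uniform construction of $\Psi$: it is built entirely case by case, using the existing classifications of genuine $\wti W$-types (Schur for $A_{n-1}$, Read for $B_n/C_n/D_n$, Morris for the exceptionals). Property~(1) is checked by explicit character computations, e.g.\ a trace formula for $\wti\sigma_\lambda$ on the class of $(123)$ in $\wti S_n$ proved by an inductive identity, and content-of-partition formulas in types $B/C/D$; property~(2) is verified via Slooten's algorithm and induction/restriction for $W_n$, and by direct tensor-product decompositions (GAP/\textit{chevie}) in the exceptional types; property~(3) falls out of the same tables. There is no Dirac-cohomology step anywhere.

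Your abstract route has a genuine gap at its core. The Dirac element of \cite{BCT} lives in $\bH\otimes C(V)$ for the graded affine Hecke algebra $\bH$, not in $\bC[\wti W]\otimes C(V)$; its square relates $\Omega_{\wti W}$ to the Hecke-algebra Casimir (the symmetric-algebra part), not to an element $\Omega_W\in\bC[W]$ obtained by dropping the tildes. In particular, your claimed identity $\sigma_{(e,\phi)}(\Omega_W)=\langle h,h\rangle$ for a putative Weyl-group Casimir has no source: Springer representations are $W$-modules, not $\bH$-modules, and nothing in \cite{BCT} provides such a scalar for them. What \emph{is} true is that tempered $\bH$-modules with real central character have Dirac cohomology whose $\wti W$-constituents satisfy $\wti\sigma(\Omega_{\wti W})=\langle h,h\rangle$; turning that into a statement purely about $W$- and $\wti W$-types, with well-definedness of $\Psi$ and image exactly $\C T_0(G)$, is precisely the content of the theorem and is not delivered by the $D^2$ formula alone. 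Your closing concession---that surjectivity onto $\C T_0(G)$ and the bijection in~(3) will require the Morris/Read/Stembridge tables and direct exceptional computations---is in fact the entire proof as the paper carries it out, not a residual step.
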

Since $\triv(\Omega_{\wti W})=\sgn(\Omega_{\wti W})$, 
Theorem \ref{t:intro}(1) says in particular that any two associate
genuine $\wti W$-types $\wti\sigma_1\sim\wti\sigma_2$ lie in the same fiber of $\Psi$. This is why we need to quotient by $\sim$ in (\ref{dist}). It is natural to ask if one could reformulate (2) in the theorem so
that a bijection like (\ref{dist}) (with certain appropriate quotients in the
right hand side) holds for  non-distinguished
$e\in \C N_0(\fg)$. This is almost always the case, but there are
counterexamples, e.g., Remark \ref{counter}.

We should make clear that
while the main
result may appear close to a Springer type classification for $\wti W$, we do not
provide here a geometric construction for genuine
representations of $\wti W$. As we explain in \S\ref{s:3}, this
classification fits in the setting of elliptic
representation theory of $W$ (Reeder \cite{R}) and $\wti W$, and
its connection with nilpotent orbits. A different relation, between elliptic conjugacy classes in $W$ and a family of nilpotent orbits (``basic'') is presented in a recent paper by Lusztig \cite{L2}.

There are two directions in which one can generalize Theorem
\ref{t:intro}. Firstly,  it is apparent that one can extend these
results to the generalized Springer
correspondence (\cite{L}), by using a Casimir element $\Omega_{\wti W,c}$ for
an appropriate parameter function $c:R^+\to \bZ.$ We present the details in \S\ref{s:gen}, the main result being Theorem \ref{t:gen}, which is the exact analogue of Theorem \ref{t:intro}.

Secondly, an analogous
correspondence should hold, even in the absence of nilpotent orbits, for
non-crystallographic root systems, and more generally, for complex
reflection groups. There, one should be able to substitute the nilpotent
orbits and Springer representations in the right hand side of the
correspondence in Theorem \ref{t:intro}, with the space of elliptic tempered
modules (in the sense of \cite{R,OS})
for the corresponding graded Hecke algebra and their ``lowest $W$-types''. This problem will be considered elsewhere.  

%\begin{comment}

\smallskip

\noindent{\bf Acknowledgements}. I am grateful to the referee for useful
comments and particularly for pointing out a mistake in a previous
version of section \ref{ellWtil}. This research was supported by NSF-DMS 0968065 and NSA-AMS 081022.
%\end{comment}

\section{Preliminaries}\label{sec:1}

\subsection{Root systems} We fix an $\bR$-root system $\Phi=(V,R,V^\vee, R^\vee)$. This means that $V, V^\vee$ are finite dimensional $\bR$-vector spaces, with a perfect bilinear pairing $(~,~): V\times V^\vee\to \bR$, $R\subset V\setminus\{0\},$ $R^\vee\subset V^\vee\setminus\{0\}$ are finite subsets in bijection
\begin{equation}
R\longleftrightarrow R^\vee,\ \al\longleftrightarrow\check\al,\ \text{such that }(\al,\check\al)=2.
\end{equation}
Moreover, the reflections
\begin{equation}
s_\al: V\to V,\ s_\al(v)=v-(v,\check\al)\al, \quad s_\al:V^\vee\to V^\vee,\ s_\al(v')=v'-(\al,v')\check\al, \quad \al\in R,
\end{equation}
leave $R$ and $R^\vee$ invariant, respectively. Let $W$ be the subgroup of $GL(V)$ (respectively $GL(V^\vee)$) generated by $\{s_\al:~\al\in R\}$.

We will assume that the root system $\Phi$ is reduced, meaning that
$\al\in R$ implies $2\al\notin R,$ and crystallographic, meaning that
$(\al,\check\al')\in\bZ$ for all $\al\in R,~\check\al'\in R^\vee.$ We
also assume that $R$ generates $V$.
We will fix a choice of simple roots $\Pi\subset R$, and consequently, positive roots $R^+$ and positive coroots $R^{\vee,+}.$ Often, we will write $\al>0$ or $\al<0$ in place of $\al\in R^+$ or $\al\in (-R^+)$, respectively. 

\smallskip

We fix a $W$-invariant inner product $\langle~,~\rangle$ on
$V$.  Denote also by $\langle~,~\rangle$ the dual inner product on  $V^\vee.$ If $v$ is a vector in $V$ or $V^\vee$, we denote $|v|:=\langle v,v\rangle^{1/2}.$

%We denote $V=V_0\otimes_\bR \bC$, and $V^\vee=V_0^\vee\otimes_\bR \bC.$
%Finally, we fix  a $W$-invariant function $c:R\to \mathbb R$,
%$c(\alpha)=c_\alpha$. 

\subsection{The Clifford algebra}
A classical reference for the Clifford algebra is \cite{Ch} (see also
section II.6 in \cite{BW}). Denote by $ C(V)$ the Clifford algebra defined by $V$
and the inner product $\langle~,~\rangle$.  More precisely, $
C(V)$ is the
quotient of the tensor algebra of $V$ by the ideal generated by
$$\om\otimes \om'+\om'\otimes \om+2\langle \om,\om'\rangle,\quad
\om,\om'\in V.$$
Equivalently,  $ C(V)$ is the associative algebra
with unit generated by $V$ with relations:
\begin{equation}
\om^2=-\langle\om,\om\rangle,\quad \om\om'+\om'\om=-2\langle\om,\om'\rangle.
\end{equation}
Let $\mathsf{O}(V)$ denote the group of orthogonal transformation of
$V$ with respect to $\langle~,~\rangle$. This acts by algebra
automorphisms on $ C(V)$, and the action of $-1\in
\mathsf{O}(V)$ induces a grading
\begin{equation}
 C(V)= C(V)_{\mathsf{even}}+  C(V)_{\mathsf{odd}}.
\end{equation}
Let $\ep$ be the automorphism of $ C(V)$ which is $+1$ on $
C(V)_{\mathsf{even}}$ and $-1$ on $ C(V)_{\mathsf{odd}}$.
Let ${}^t$ be the transpose automorphism of $ C(V)$
characterized by
\begin{equation}
\om^t=-\om,\ \om\in V,\quad (ab)^t=b^ta^t,\ a,b\in C(V).
\end{equation}
The Pin group  is
\begin{equation}\label{pin}
\mathsf{Pin}(V)=\{a\in  C(V):~ \ep(a) V a^{-1}\subset
V,~ a^t=a^{-1}\}.
\end{equation}
It sits in a short exact sequence
\begin{equation}\label{ses}
1\longrightarrow \bZ/2\bZ \longrightarrow
\mathsf{Pin}(V)\xrightarrow{\ \ p\ \ } \mathsf{O}(V)\longrightarrow 1,
\end{equation}
where the projection $p$ is given by $p(a)(\om)=\ep(a)\om a^{-1}$.

\subsection{The spin modules $S$}\label{s:spinmod}
If $\dim V$ is even, the Clifford algebra $C(V)$ has a unique (up to equivalence) complex simple module
$(\gamma, S)$ of $ C(V),$ of dimension $2^{\dim
    V/2}$, endowed with a positive
definite Hermitian form $\langle ~,~\rangle_{ S}$ such that
\begin{equation}
\langle\gamma(a)s,s'\rangle_{ S}=\langle s,\gamma(a^t) s'\rangle_{ S},\quad\text{for all
  }a\in  C(V)\text{ and } s,s'\in  S. 
\end{equation}
When $\dim V$ is odd, there are two simple inequivalent unitary
modules $(\gamma_+,S^+),$ $(\gamma_-,S^-)$ of dimension  $2^{[\dim
  V/2]}$. 
In order to simplify the formulation of the results, we will often refer to any one of $S$, $S^+,$ $S^-$, as a spin module. When there is no possibility of confusion, we may also denote by $S$ any one of $S^+$ or $S^-$, in order to state results in a uniform way.

Via (\ref{pin}), a spin module $S$ is an irreducible unitary
$\mathsf{Pin}(V)$ representation.
It is well-known (e.g., section II.6 in \cite{BW}) that as $\mathsf{Pin}(V)$-representations, we have: 
\begin{equation}\label{Stens} 
S\otimes  S\cong \bigwedge^\bullet V, \text{ when }\dim
V \text{ is even},\quad S\otimes  S\cong \bigoplus_{i=0}^{[\dim V/2]}\bigwedge^{2i}
V,\text{ when }\dim V\text{ is odd.}
\end{equation}

\subsection{The spin cover $\wti W$}\label{Wspin}

The Weyl group $W$ acts by orthogonal
transformations on $V$, so one can embed $W$ as a subgroup of
$\mathsf{O}(V).$ We define the group $\wti W$ in
$\mathsf{Pin}(V)$: 
\begin{equation}
\wti W:=p^{-1}(\mathsf{O}(V))\subset \mathsf{Pin}(V),\text{ where $p$
  is as in (\ref{ses}).}
\end{equation}
Therefore, $\wti W$ is a central extension of $W$:
\begin{equation}
1\longrightarrow \bZ/2\bZ \longrightarrow
\wti W\xrightarrow{\ \ p\ \ } W\longrightarrow 1.
\end{equation}
The group $\wti W$ has a Coxeter presentation similar to that of $W$.
Recall that as a
Coxeter group, $W$ has a presentation:
\begin{equation}
W=\langle s_{\al},~\al\in\Pi|\  s_\al^2=1,~(s_\al
  s_\beta)^{m(\al,\beta)}=1, ~\al\neq\beta\in\Pi\rangle,
\end{equation}
for certain positive integers $m(\al,\beta).$
Theorem 3.2 in \cite{Mo} gives:
\begin{equation}
\wti W=\langle z,\wti s_{\al},~\al\in\Pi|\  z^2=1, \wti s_\al^2=z,~(\wti s_\al
  \wti s_\beta)^{m(\al,\beta)}=1, ~\al\neq\beta\in\Pi\rangle.
\end{equation}
We will also need the explicit embedding of $\wti W$ into
$\mathsf{Pin}(V).$

\begin{theorem}[{\cite[Theorem 3.2]{Mo}}]\label{embed}
The assignment
\begin{equation}
\begin{aligned}
&\tau(z)=-1\\
&\tau(\wti s_\al)=f_\al:=\al/|\al|,\quad \al\in\Pi,
\end{aligned}
\end{equation}
extends to a group homomorphism $\tau:\wti W\to \mathsf{Pin}(V).$ Moreover, we have
$\tau(\wti s_\beta)=f_\beta:=\beta/|\beta|,$ for all $\beta\in R^+$.
\end{theorem}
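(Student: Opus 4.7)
The plan is to verify that the map $\tau$, defined on the generators $z, \wti s_\al$ ($\al\in\Pi$), respects each relation of the Coxeter-type presentation of $\wti W$ recalled in \S\ref{Wspin}. The relations $z^2 = 1$ and $\wti s_\al^2 = z$ are immediate from the Clifford identity: $(-1)^2 = 1$, and since $f_\al$ is a unit vector, $f_\al^2 = -\langle f_\al, f_\al\rangle = -1 = \tau(z)$. Moreover, each $f_\al$ lies in $\mathsf{Pin}(V)$, not merely in $C(V)$: the transpose $f_\al^t = -f_\al$ coincides with $f_\al^{-1}$, and the standard calculation $f_\al\om f_\al = \om - 2\langle f_\al,\om\rangle f_\al$ shows $\ep(f_\al) V f_\al^{-1} \subset V$, verifying (\ref{pin}); as a byproduct this shows $p(f_\al) = s_\al$.

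The heart of the argument is the braid relation. Let $\al\neq\beta$ in $\Pi$, $m = m(\al,\beta)$, and $\phi := \angle(\al,\beta) = \pi - \pi/m$. The Clifford anticommutation $f_\al f_\beta + f_\beta f_\al = -2\langle f_\al,f_\beta\rangle = -2\cos\phi$ produces the decomposition
\begin{equation*}
f_\al f_\beta = -\cos\phi + X,\qquad X := \tfrac{1}{2}(f_\al f_\beta - f_\beta f_\al),
\end{equation*}
and a direct expansion using $f_\al^2 = f_\beta^2 = -1$ gives $X^2 = -\sin^2\phi$. Setting $e := X/\sin\phi$, one has $e^2 = -1$, so the subalgebra $\bR[e] \subset C(V)$ is isomorphic to $\bC$ via $e \mapsto i$. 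In this identification $f_\al f_\beta$ becomes the unit complex number $\cos(\pi/m) + \sin(\pi/m)\cdot e$ of argument $\pi/m$, so $(f_\al f_\beta)^m = \cos\pi + \sin\pi\cdot e = -1 = \tau(z)$ in $\mathsf{Pin}(V)$; this matches the image of the braid relation in $\wti W$ under $\tau$ (once the central sign in the presentation is tracked carefully). Establishing this ``rotation algebra'' isomorphism $\bR[f_\al, f_\beta]\cong \bC$ inside the Clifford algebra is the main obstacle; once it is in hand, the rest of the verification is mechanical.

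For the ``Moreover'' assertion, take $\beta\in R^+$ and write $\beta = w(\al)$ for some $w\in W$ and $\al\in\Pi$. Choose a reduced expression $w = s_{\al_1}\cdots s_{\al_k}$ and set $\wti w := \wti s_{\al_1}\cdots\wti s_{\al_k}\in\wti W$. By the standard definition of the lifted reflection in $\wti W$, $\wti s_\beta = z^{\ell(w)}\wti w\wti s_\al\wti w^{-1}$, where the central factor ensures $\wti s_\beta^2 = z$ and independence of the chosen reduced expression (a consequence of the braid relations just verified). Applying $\tau$ and using the projection formula $p(a)(\om) = \ep(a)\om a^{-1}$ of (\ref{ses}) together with $p(\tau(\wti w)) = w$, one computes
\begin{equation*}
\tau(\wti w)\, f_\al\, \tau(\wti w)^{-1} = (-1)^{\ell(w)} w(f_\al) = (-1)^{\ell(w)} f_\beta;
\end{equation*}
combining with $\tau(z^{\ell(w)}) = (-1)^{\ell(w)}$ yields $\tau(\wti s_\beta) = f_\beta$, as required.
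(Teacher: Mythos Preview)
The paper does not give its own proof of this statement; it is simply quoted from \cite[Theorem~3.2]{Mo}, so there is no argument in the paper to compare yours against.

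Your Clifford computations are correct, and in particular your identity $(f_\al f_\beta)^m=-1$ is right: from $f_\al f_\beta + f_\beta f_\al = -2\cos\phi$ with $\phi=\pi-\pi/m$ one obtains the quadratic relation $(f_\al f_\beta)^2 - 2\cos(\pi/m)\,(f_\al f_\beta)+1=0$, and hence $(f_\al f_\beta)^m=-1$. But this is \emph{not} what the presentation recorded in \S\ref{Wspin} asks for: there the relation reads $(\wti s_\al\wti s_\beta)^{m(\al,\beta)}=1$, and since $\tau(z)=-1$ you would need $(f_\al f_\beta)^m=+1$ for $\tau$ to respect it. (Already for $m=2$, orthogonal $f_\al,f_\beta$ anticommute, so $(f_\al f_\beta)^2=-1$, not $+1$.) Your parenthetical ``once the central sign in the presentation is tracked carefully'' is exactly where the argument has a gap: either the paper's transcription of Morris's presentation carries a sign slip and the relation should read $(\wti s_\al\wti s_\beta)^{m(\al,\beta)}=z$, which your computation then verifies, or you must explain concretely how the discrepancy is reconciled. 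You should confront this head-on rather than absorb it into a remark.

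For the ``Moreover'' clause, the conjugation formula and the sign bookkeeping via $\ep$ are fine, but note that $\wti s_\beta$ for non-simple $\beta$ is not defined by the presentation in \S\ref{Wspin}; you are in effect \emph{defining} it by $\wti s_\beta := z^{\ell(w)}\,\wti w\,\wti s_\al\,\wti w^{-1}$ and then checking $\tau(\wti s_\beta)=f_\beta$. That is legitimate, but well-definedness needs more than independence of the reduced word for a fixed $w$: you also need independence of the choice of pair $(w,\al)$ with $w(\al)=\beta$.
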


\begin{definition}\label{d:assoc}We call a representation $\wti\sigma$ of $\wti W$ genuine
(resp. non-genuine) if
$\wti\sigma(z)=-1$ (resp. $\wti\sigma(z)=1$). The non-genuine $\wti
W$-representations are the ones that factor through $W$.

We say that two genuine $\wti W$-types $\sigma_1,\sigma_2$ are associate if $\sigma_1\cong\sigma_2\otimes\sgn$.
\end{definition}

\subsection{}\label{s:2.5} Via the embedding $\tau$, we can regard $S$ if $\dim V$ is even (resp. $S^\pm$ if $\dim V$ is odd) as unitary (genuine)
$\wti W$-representations. Since $R$ spans $V$, they are irreducible representations (\cite[Theorem 3.3]{Mo}). When $\dim V$ is odd, $S^+$ and $S^-$ are associate in the sense of Definition \ref{d:assoc}, while if $\dim V$ is even, $S$ is self-associate. 

 Let $S$ be a spin $\wti W$-module and
  $(\sigma,U)$ a $W$-type. From (\ref{Stens}), we see that $\sigma\otimes S$ contains a spin $\wti W$-module
if and only if $\sigma$ appears as a constituent of $\bigwedge^\bullet V$. Moreover, it is known (\cite[Theorem 5.1.4]{GP}), that if
$W$ is irreducible, then $\bigwedge ^i V$,
  $0\le i\le \dim V$, forms a set of irreducible, pairwise inequivalent $W$-representations.

\subsection{The Casimir element of $\wti W$}\label{casimir} The notions in this
subsection are motivated by the results of \cite{BCT}, where the
element $\Omega_{\wti W}$ that we define here appeared naturally in the context of the Dirac operator for the graded affine Hecke algebra.

Let $c:R^+\to\bR$ be a $W$-invariant function.

\begin{definition}Denote
\begin{equation}\label{omWtilde}
\Omega_{\wti W,c}=\sum_{\substack{\al>0,\beta>0\\s_\al(\beta)<0}} c(\al) c(\beta)
 |\check\al| |\check\beta| ~\wti s_\al \wti s_\beta=\sum_{\substack{\al>0,\beta>0\\ \langle\al,\beta\rangle\neq 0}}
\frac{\langle\check\al,\check\beta\rangle}{|\cos(\al,\beta)|} c(\al)c(\beta) \wti s_\al \wti s_\beta.
\end{equation}
The equality holds because the contributions in the second sum of the pairs $\{\al,\beta\}$ and $\{s_\al(\beta),\al\}$ cancel out, whenever $s_\al(\beta)>0$. If $c\equiv 1,$ we write $\Omega_{\wti W}$ for $\Omega_{\wti W,1}.$
\end{definition}

If $C_w$ is the $W$-conjugacy class of $w\in W$, then there are two
possibilities for 
$p^{-1}(C_w)\subset \wti W$:

\begin{enumerate}
\item $p^{-1}(C_w)$ is a single $\wti W$-conjugacy class, or
\item $p^{-1}(C_w)$ splits into two conjugacy $\wti W$-classes $\wti C_w:=\{w':
  w'\in C_w\}$ and $z\wti C_{w}:=\{zw': w'\in C_w\}$.
\end{enumerate} 

One sees that if $w=s_\al s_\beta$, then the second case holds (\cite{Mo}).
This implies that we have
\begin{equation}
\Omega_{\wti W,c}\in \mathbb C[\wti W]^{\wti W}.
\end{equation}
In particular, every $\wti\sigma\in \widehat{\wti W}$ acts on
$\Omega_{\wti W,c}$ by a scalar, which we denote
$\wti\sigma(\Omega_{\wti W,c}).$

We will refer to $\Omega_{\wti W}$ as the Casimir element of $\wti W$. The justification for the name is given by Theorem
\ref{t:intro}(1). As a hint towards this result, let us recall  (e.g.,
\cite[p. 562]{Mo}) that 
\begin{equation}
\tr_S(s_\al s_\beta)=|\cos(\al,\beta)| \dim S,\ \al,\beta\in R^+,
\end{equation}
for a spin module $S$. This means that we have
\begin{equation}
S(\Omega_{\wti W})=\sum_{\al>0,\beta>0}\langle\check\al,\check\beta\rangle=\langle2\check\rho,2\check\rho\rangle,
\end{equation}
where $\check\rho=\frac 12\sum_{\al>0}\check\al.$

\section{$\wti W$-types}\label{s:3}

In this section, we prove our main results, Theorems \ref{t:intro} and \ref{t:gen}. Before that,  we recall certain elements from the theory of elliptic representations of a finite group. While these elements are not necessary for proving Theorems \ref{t:intro} and \ref{t:gen}, they are useful for setting our result in the appropriate context.

For a finite group $\Gamma$, let $\C R(\Gamma)$ denote the representation theory ring of $\Gamma,$ and let $\widehat \Gamma$ denote the set of irreducible representations of $\Gamma.$

\subsection{Elliptic representations of a finite group}\label{s:ell}
The reference for most of the results in 
this and the next subsection is \cite{R}. Assume first that $\Gamma$ is an arbitrary finite subgroup of $\mathsf{GL}(V)$. An element $\gamma\in\Gamma$ is called elliptic (or anisotropic) if $V^\gamma=0$. Let $\Gamma_{\mathsf{ell}}$ denote the set of elliptic elements in $\Gamma$. This is closed under conjugation by $\Gamma.$ Let $\mathcal L$ be the set of subgroups $L\subseteq\Gamma,$ such that $V^L\neq 0$. For every $L\in \mathcal L$, let $\Ind_L^\Gamma:\C R(L)\to \C R(\Gamma)$ be the induction map, and denote
\begin{align}
&\C R_{\mathsf{ind}}(\Gamma)=\sum_{L\in \C L}\Ind_L^\Gamma(\C R(L))\subseteq \C R(\Gamma),\\
&\overline{\C R}(\Gamma)=\C R(\Gamma)/\C R_{\mathsf{ind}}(\Gamma).
\end{align}
One calls $\overline{\C R}(\Gamma)$ the space of elliptic representations of $\Gamma.$

Define a bilinear pairing, called the elliptic pairing on $\Gamma$:
\begin{align}
e_\Gamma(\sigma,\sigma')&=\sum_{i\ge 0} (-1)^i \dim\operatorname{Hom}_\Gamma(\bigwedge^i
V\otimes \sigma,\sigma'),\quad \sigma,\sigma'\in \C R(\Gamma).
\end{align}

Proposition 2.2.2 in \cite{R} shows, in particular, that the radical
of $e_\Gamma$ is precisely $\C R_{\mathsf{ind}}(\Gamma)$, and thus
$e_\Gamma$ induces a nondegenerate bilinear form on $\overline{\C
  R}(\Gamma)$.  Moreover, if $\CC_{\mathsf{ell}}(\Gamma)$ denotes the set of $\Gamma$-conjugacy classes in $\Gamma_{\mathsf{ell}},$ we have 
\begin{equation}\label{dimell}
\dim\overline{\C R}(\Gamma)=|\CC_{\mathsf{ell}}(\Gamma)|.
\end{equation}

\begin{lemma}\label{l:red} If $\sigma\in \C R(\Gamma)$, then
  $\sigma\otimes \bigwedge^{\dim V}V-(-1)^{\dim V}\sigma$ is in $\C R_{\mathsf{ind}}(\Gamma)$. 
\end{lemma}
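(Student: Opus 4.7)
The plan is to use the characterization of $\C R_{\mathsf{ind}}(\Gamma)$ as the radical of the elliptic pairing $e_\Gamma$ (Proposition 2.2.2 of \cite{R}, cited above) and then check membership by a short character calculation. First I would convert the radical condition into a statement about character vanishing on the elliptic set. Using $\sum_i(-1)^i\chi_{\bigwedge^i V}(\gamma)=\det(I-\gamma|V)$ together with the character inner product gives
\begin{equation*}
e_\Gamma(\sigma,\sigma')=\frac{1}{|\Gamma|}\sum_{\gamma\in\Gamma}\det(I-\gamma|V)\,\chi_\sigma(\gamma)\,\overline{\chi_{\sigma'}(\gamma)},
\end{equation*}
and since $\det(I-\gamma|V)$ is nonzero precisely when $V^\gamma=0$, while the irreducible characters span the space of class functions, a virtual character lies in the radical of $e_\Gamma$ if and only if it vanishes on $\Gamma_{\mathsf{ell}}$.

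Next, for $\tau:=\sigma\otimes\bigwedge^{\dim V}V-(-1)^{\dim V}\sigma$, the character at $\gamma\in\Gamma$ factors as
\begin{equation*}
\chi_\tau(\gamma)=\chi_\sigma(\gamma)\bigl[\det(\gamma|V)-(-1)^{\dim V}\bigr].
\end{equation*}
So the whole problem reduces to showing $\det(\gamma|V)=(-1)^{\dim V}$ for every elliptic $\gamma\in\Gamma$, which is the only nonformal step.

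For this last point I would use that $V$ is a real vector space and $\gamma\in\Gamma$ has finite order. Its eigenvalues on $V\otimes_{\bR}\bC$ are then roots of unity; the non-real ones occur in complex-conjugate pairs whose product is $|\lambda|^2=1$, while the real ones are $\pm 1$. Ellipticity rules out any $(+1)$-eigenspace, so $\dim V\equiv\dim V_{-1}\pmod 2$ and $\det(\gamma|V)=(-1)^{\dim V_{-1}}=(-1)^{\dim V}$, giving $\chi_\tau|_{\Gamma_{\mathsf{ell}}}\equiv 0$ and hence $\tau\in\C R_{\mathsf{ind}}(\Gamma)$. I do not foresee a genuine obstacle: the whole argument is a two-line character manipulation once the radical of $e_\Gamma$ is re-expressed via the elliptic set, combined with the standard parity observation on real finite-order operators above.
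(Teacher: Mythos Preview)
Your argument is correct. Both you and the paper use that $\C R_{\mathsf{ind}}(\Gamma)$ is the radical of $e_\Gamma$, but you reach the radical by a different route. The paper invokes the Poincar\'e-duality isomorphism $\bigwedge^i V\otimes\bigwedge^{\dim V}V\cong\bigwedge^{\dim V-i}V$ of $\Gamma$-modules and simply reindexes the alternating sum defining $e_\Gamma$ to obtain $e_\Gamma(\sigma\otimes\bigwedge^{\dim V}V,\sigma')=(-1)^{\dim V}e_\Gamma(\sigma,\sigma')$ for all $\sigma'$; it never needs to mention the elliptic set explicitly. You instead rewrite $e_\Gamma$ as the weighted inner product with weight $\det(I-\gamma\mid V)$, observe that this weight is supported exactly on $\Gamma_{\mathsf{ell}}$, and then prove the standalone fact $\det(\gamma\mid V)=(-1)^{\dim V}$ for elliptic $\gamma$ via the real-eigenvalue parity argument. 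The paper's proof is a touch shorter and stays purely at the level of representation isomorphisms; your proof is equally valid and has the side benefit of isolating the determinant identity on elliptic elements, which is a useful fact in its own right (and is essentially equivalent, at the character level, to the duality the paper uses).
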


\begin{proof}
We have $\bigwedge^i V\otimes\bigwedge^{\dim V} V\cong\bigwedge^{\dim V-i}
V$, as $\Gamma$-representations, for all $0\le i\le \dim V.$ From
this, it follows that $e_\Gamma(\sigma\otimes \bigwedge^{\dim
  V}V,\sigma')=(-1)^{\dim V} e_\Gamma(\sigma,\sigma'),$ for all
$\sigma,\sigma'$.
\end{proof}

Let $\C R^{\mathsf{red}}(\Gamma)$ denote the quotient of $\C
R(\Gamma)$ by the subspace generated by $\sigma\otimes \bigwedge^{\dim
  V}V-(-1)^{\dim V}\sigma$, for all $\Gamma$-types $\sigma.$ Lemma
\ref{l:red} implies that the natural (surjective) map
\begin{equation}\label{mapred}
\C R^{\mathsf{red}}(\Gamma)\to \overline{\C R}(\Gamma)
\end{equation}
is well-defined and preserves $e_\Gamma.$

\subsection{Elliptic representations of $W$}\label{ellW}
We specialize to $\Gamma=W$ here acting on the reflection representation $V$; \cite{R} analyzes the relation between $\overline{\C
  R}(W)$ and Springer representations. 

Let $\fg$ be the complex Lie algebra determined by the root system $\Phi$, and
let $G$ be the simply connected connected Lie group with Lie algebra
$\fg.$ For every $x\in \fg,$ let $Z_G(x)$ denote the centralizer of
$x$ in $G$, and let $Z_G(x)^0$ be the identity component. Define the
A-group of $x$ in $G$ to be the quotient
\begin{equation}
A(x)=Z_G(x)/Z_G(x)^0Z(G),
\end{equation}
where $Z(G)$ is the center of $G$.

Specialize to the case when $x\in\C N(\fg),$ the set of nilpotent elements of $\fg.$ By
the Jacobson-Morozov theorem, there exists a Lie algebra homomorphism
$\kappa: sl(2,\bC)\to \fg$ such that
$\kappa\left(\begin{matrix}0&1\\0&0\end{matrix}\right)=e.$
Let $\fk s_0$ be a the semisimple part of the Lie algebra of a maximal torus in $Z_G(\kappa(sl(2,\bC))$. As explained in
  \S 3.2 of \cite{R}, the group $A(e)$ acts naturally on $\fk s_0$,
  i.e., $A(e)\subset \mathsf {GL}(\fk s_0)$,
  and therefore we may define $\overline {\C R}(A(e))$ with respect to
  this action.

\begin{definition}\label{quasi} 
An element $e\in \C N(\fg)$ is called distinguished if $Z_G(e)$ contains no nontrivial
torus.
An element $e\in\C N(\fg)$ is called quasi-distinguished if there
exists a semisimple element $t\in Z_G(e)$ such that $t\exp(e)$ centralizes
no nontrivial torus in $G$.
In particular, every distinguished $e$ is quasi-distinguished with $t=1$.
\end{definition}

Corollary 3.2.3 in \cite{R} shows that $\overline {\C  R}(A(e))\neq 0$
if and only if $e$ is quasi-distinguished in $\fg.$

From Springer theory, recall $\C B_e$, the variety of Borel
subalgebras of $\fg$ containing $e$. Let $d_e$ denote the complex
dimension of $\C B_e.$ Since $Z_G(e)$ acts on $\C B_e$,
we get an action of $A(e)$ on the cohomology $H^\bullet(\C B_e)$. Let
$\widehat {A(e)}_0$ be the set of irreducible $A(e)$-representations
that appear in this action, and let  
$\C R_0(A(e))$ be the subspace of $\C R(A(e))$ spanned by $\widehat {A(e)}_0$.

The Springer correspondence constructs an action of $W$ on
$H^\bullet(\C B_e)$ which commutes with the action of $A(e)$, and
gives a map:
\begin{equation}
~G\backslash\{(e,\phi): e\in\C N(\fg),~\phi\in\widehat {A(e)}_0\}\longrightarrow \widehat
  W,\quad  (e,\phi)\mapsto\sigma_{(e,\phi)}:=\Hom_{A(e)}[\phi,
    H^{2d_e}(\C B_e)],
\end{equation}
which is well-defined and bijective.

For every $\phi\in \widehat {A(e)}_0$, set 
\begin{equation}
H_e(\phi):=\Hom_{A(e)}[\phi,H^\bullet(\C B_e)],
\end{equation}
and regard it as an element of $\C R(W)$. Let $\C R_e(W)$ be the span
in $\C R(W)$ of $H_e(\phi)$, $\phi\in \widehat{A(e)}_0$. The space
$\{H_e(\phi): e,\phi\}$ is basis of $\C R(W)$, and therefore, we have
a decomposition $\C R(W)=\sum_e \C R_e(W)$, which induces a
decomposition $\overline{\C R}(W)=\sum_e\overline{\C R}_e(W).$

\begin{theorem}[\cite{R}]\label{t:reeder}
The map $H_e:\C R_0(A(e))\to \C R_e(W)$ induces a vector space
isomorphism $\overline H_e: \overline{\C R}_0(A(e))\to \overline{\C
  R}_e(W)$. Moreover, we have:
\begin{enumerate}
\item The isomorphism $\overline H_e$ is an isometry with respect to
  the elliptic pairings $e_W$ and $e_{A(e)}$;
\item The spaces $\overline{\C R}_0(A(e))$ and $ \overline{\C
  R}_e(W)$ are nonzero if and only if $e$ is quasi-distinguished;
\item If $e$ is distinguished, then $\{H_e(\phi):~\phi\in
  \widehat{A(e)}_0\}$ is an orthonormal basis of $\overline{\C
    R}_e(W)$ with respect to $e_W$.
\end{enumerate}
\end{theorem}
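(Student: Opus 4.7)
The plan is to verify the three claims by reducing everything to a character-theoretic identity, with the bridge between the $W$-side and the $A(e)$-side supplied by a Lefschetz fixed-point computation on the Springer fibre $\C B_e$. I will use throughout the reformulation of the elliptic pairing obtained from $\sum_i (-1)^i\tr(\gamma|\bigwedge^i V) = \det(1-\gamma|V)$:
\begin{equation}
e_\Gamma(\sigma,\sigma')=\frac{1}{|\Gamma|}\sum_{\gamma\in\Gamma_{\mathsf{ell}}}\det(1-\gamma|V)\,\chi_\sigma(\gamma)\overline{\chi_{\sigma'}(\gamma)},
\end{equation}
applied first to $\Gamma=W$ acting on $V$ and then to $\Gamma=A(e)$ acting on $\fk s_0$.

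First, I would show that $\overline H_e$ is well-defined. By Proposition 2.2.2 of \cite{R}, $\C R_{\mathsf{ind}}(A(e))$ coincides with the radical of $e_{A(e)}$. So it suffices to prove that if $\phi\in\C R_{\mathsf{ind}}(A(e))$ then $H_e(\phi)$ lies in the radical of $e_W$. This in turn follows from the isometry once it is established; alternatively, it is a consequence of the compatibility of the Springer correspondence with Levi restriction: if $t\in \fk s_0$ is fixed by some subgroup $B\subseteq A(e)$, then the centralizer $\fg^t$ is a Levi subalgebra containing $e$, and $\Ind_B^{A(e)}$ corresponds on the $W$-side to parabolic induction from the Weyl group of $\fg^t$, which fixes $V^{W_{\fg^t}}\ne 0$.

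The crux is the isometry, which I would prove by computing both sides with the character formula. The character of $H_e(\phi)$ at an elliptic $w\in W$ unpacks as
\begin{equation}
\chi_{H_e(\phi)}(w)=\frac{1}{|A(e)|}\sum_{a\in A(e)} \chi_\phi(a)\,\tr\bigl((w,a)\mid H^\bullet(\C B_e)\bigr).
\end{equation}
The central ingredient is a Lefschetz-type formula (in the spirit of Kazhdan--Lusztig or Lusztig's computations of Green functions) that equates the signed trace on $H^\bullet(\C B_e)$ of a pair $(w,a)$, with $w$ elliptic, to a sum over $(w,a)$-fixed components. Combined with the Jacobson-Morozov parametrization of these components by $A(e)$-orbits on $\fk s_0$, and the standard identity $\det(1-w|V)=\det(1-w|\fk s_0)|W^w\backslash \cdots|$ type factorization arising from the $W$-action on the centralizer of the $sl_2$-triple, one obtains
\begin{equation}
\frac{1}{|W|}\sum_{w\in W^{\mathsf{ell}}}\det(1-w|V)\,\tr\bigl((w,a)\mid H^\bullet(\C B_e)\bigr)\overline{\tr\bigl((w,a')\mid H^\bullet(\C B_e)\bigr)} = \det(1-a|\fk s_0)\,\delta_{\text{orbits}},
\end{equation}
which after averaging over $A(e)$ gives exactly $e_{A(e)}(\phi,\phi')$. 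This is the main obstacle, since it requires a careful bookkeeping of fixed components of elliptic $w$ on $\C B_e$ in terms of the $A(e)$-equivariant geometry of the centralizer $Z_G(\{e,h,f\})$.

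Granting the isometry, claims (2) and (3) follow quickly. For (2), by (\ref{dimell}) applied to $A(e)\subset\mathsf{GL}(\fk s_0)$, $\overline{\C R}_0(A(e))\neq 0$ iff $A(e)$ contains an element acting elliptically on $\fk s_0$; by a standard structural argument (cf. \S3.2 of \cite{R}), such an element is exactly a lift of a semisimple $t\in Z_G(e)$ for which $t\exp(e)$ centralizes no nontrivial torus, i.e., $e$ is quasi-distinguished. For (3), when $e$ is distinguished one has $\fk s_0=0$, so $\bigwedge^\bullet \fk s_0=\bC$ and $e_{A(e)}(\phi,\phi')=\dim\Hom_{A(e)}(\phi,\phi')=\delta_{\phi,\phi'}$; transporting this orthonormality across the isometry $\overline H_e$ gives the desired orthonormal basis of $\overline{\C R}_e(W)$.
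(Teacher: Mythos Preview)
The paper does not prove this theorem at all: it is quoted as a result of Reeder \cite{R} and stated without proof, so there is no ``paper's own proof'' to compare your proposal against. Your proposal is therefore not a comparison candidate but an independent attempt to reprove Reeder's theorem.

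On its own merits, your sketch has the right architecture for parts (2) and (3): once the isometry is in hand, the deduction that $\overline{\C R}_0(A(e))\ne 0$ iff $A(e)$ has an elliptic element on $\fk s_0$ iff $e$ is quasi-distinguished is correct, and the argument for (3) via $\fk s_0=0$ is clean. The problem is the isometry itself. The displayed identity you call ``the central ingredient'' is asserted, not derived: you invoke a ``Lefschetz-type formula'' and a ``standard identity $\det(1-w|V)=\det(1-w|\fk s_0)|W^w\backslash\cdots|$ type factorization'' with a literal ellipsis. That step is the entire content of the theorem, and as written it is a placeholder rather than an argument. In Reeder's paper the isometry is not obtained by a direct fixed-point count on $\C B_e$; it passes through the Euler--Poincar\'e pairing on representations of the affine Hecke algebra (equivalently, of the $p$-adic group), the identification of that pairing with $e_W$ on the Weyl group side, and the Kazhdan--Lusztig realization of standard modules on the $A(e)$ side. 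If you want a self-contained proof along the lines you indicate, you would need to make the fixed-point formula precise (which elliptic $w$ act on which $\C B_e$, what the components of the fixed locus are, and why the alternating trace localizes as claimed) and actually establish the factorization of $\det(1-w|V)$ you allude to; neither is routine.
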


\subsection{Elliptic representations of $\wti W$}\label{ellWtil} We specialize now to
$\Gamma=\wti W$ acting also on the (nongenuine) reflection representation $V$. 
Let $\C R_{\mathsf{gen}}(\wti W)$ denote the subspace of $\C R(\wti
W)$ spanned by $\widehat{\wti W}_{\mathsf{gen}}$,
the irreducible genuine $\wti W$-types. Every nongenuine $\wti W$-type
is a pullback of a $W$-type, so we may regard $\C R(W)$ naturally as a
subspace of $\C R(\wti W)$. Clearly, we have
\begin{equation}
e_{\wti W}(\wti\sigma,\sigma')=0, \text{ if }\wti\sigma\in \C
R_{\mathsf{gen}}(\wti
W), \ \sigma'\in \C R(W),
\end{equation} 
therefore we have an orthogonal decomposition $\C R(\wti W)=\C
R_{\mathsf{gen}}(\wti W)\oplus \C R(W).$ As before, define $\overline {\C R}(\wti W)$  to be
the quotient by the radical of $e_{\wti W}$, and let $\overline {\C
  R}_{\mathsf{gen}}(\wti W)$ be the image of $\C R_{\mathsf{gen}}(\wti W)$ in $\overline {\C
  R}(\wti W)$. Consequently, there is an orthogonal decomposition
\begin{equation}
\overline {\C R}(\wti W)=\overline {\C R}_{\mathsf{gen}}(\wti W)\oplus \overline
          {\C R}(W).
\end{equation}
From (\ref{dimell}), we have $\dim\overline {\C R}_{\mathsf{gen}}(\wti W)=|\CC_{\mathsf{ell}}(\wti W)|-|\CC_{\mathsf{ell}}(W)|.$
Recall the projection $p:\wti W\to W$ from (\ref{ses}). Since $V$ is a nongenuine representation of $\wti W$, an element $\wti w\in \wti W$ is elliptic if and only if $p(\wti w)\in W$ is elliptic. Recall that if $C_w\subset W$ is a conjugacy class, then  $p^{-1}(C_w)$ is a single conjugacy class in $\wti W$ or it splits into two conjugacy classes in $\wti W.$ Let $\CC^0(W)$ denote the set of conjugacy classes of $W$ which split in $\wti W$ and set $\CC^0_{\mathsf{ell}}(W)=\CC^0(W)\cap \CC_{\mathsf{ell}}(W).$ Then we have
\begin{equation}\label{compare}
\dim \overline {\C R}_{\mathsf{gen}}(\wti W)=|\CC^0_{\mathsf{ell}}(W)|\le |\CC_{\mathsf{ell}}(W)|=\dim \overline {\C R}(W).
\end{equation}

The dimension of the second space is as follows, see \cite{Ca},\cite[\S3.1]{R}:
\begin{equation}\label{Rlist}
\begin{aligned}
&A_{n-1}: \quad 1, \\
&B_n: \quad\text{ the number of partitions of }n,\\ 
&D_n: \quad\text{the number of partitions of $n$ with even number of parts},\\
&G_2: \ 3,\quad F_4:\ 9,\quad E_6:\ 5,\quad E_7:\ 12,\quad E_8:\ 30.
\end{aligned}
\end{equation}
Since $\wti \sigma(z)=-1$ for every genuine $\wti W$-type $\wti
\sigma$, it is clear that if $C_w\notin \CC^0(W)$, then
$\tr_{\wti\sigma}(\wti w)=0,$ for all $\wti w\in p^{-1}(C_w)$ and all
genuine $\wti W$-types $\wti\sigma.$

\begin{lemma}\label{l:charspin} Let $S$ be a spin module for $\wti W.$ 
\begin{enumerate}
\item If $\dim V$ is even, then $\tr_S(\wti w)\neq 0$ if and only if
  $\det_V(1+p(\wti w))\neq 0.$
\item If $\dim V$ is odd, then $\tr_S(\wti w)\neq 0$ if 
  $\det_V(1-p(\wti w))\neq 0$ (i.e., if $\wti w$ is elliptic). In
  particular, $\CC^0_{\mathsf{ell}}(W)=\CC_{\mathsf{ell}}(W)$ in this case.
\end{enumerate}
\end{lemma}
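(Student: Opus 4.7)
The plan is to compute $\tr_S(\wti w)^2=\tr_{S\otimes S}(\wti w)$ using the tensor product decomposition (\ref{Stens}) and then compare the result with $\det_V(1\pm p(\wti w))$. Since $\wti w$ acts on $V$ through $p(\wti w)\in\mathsf{O}(V)$, the standard character identities for exterior powers of a linear operator $A$ give
\[
\tr_{\bigwedge^\bullet V}(\wti w)=\det\nolimits_V(1+p(\wti w)),\qquad \sum_{i\ge 0}\tr_{\bigwedge^{2i}V}(\wti w)=\tfrac12\bigl(\det\nolimits_V(1+p(\wti w))+\det\nolimits_V(1-p(\wti w))\bigr).
\]
These two identities will be the workhorses of the argument.

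For part (1), where $\dim V$ is even, (\ref{Stens}) yields $\tr_S(\wti w)^2=\det_V(1+p(\wti w))$, and the stated equivalence is then immediate. For part (2), where $\dim V$ is odd, (\ref{Stens}) combined with the second identity above gives $\tr_S(\wti w)^2=\tfrac12\bigl(\det_V(1+p(\wti w))+\det_V(1-p(\wti w))\bigr)$. At this step I will invoke a positivity observation: since $p(\wti w)\in\mathsf{O}(V)$, its eigenvalues on $V_{\bC}$ form complex conjugate pairs $\{e^{\pm i\theta}\}$ together with possible real eigenvalues $\pm 1$. A conjugate pair contributes the nonnegative factor $2\pm 2\cos\theta$ to $\det_V(1\pm p(\wti w))$, while a real eigenvalue $\pm 1$ contributes $0$ or $2$. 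Hence both determinants are $\ge 0$, so their sum vanishes if and only if both summands vanish. In particular, if $\wti w$ is elliptic, then $\det_V(1-p(\wti w))>0$, which forces $\tr_S(\wti w)^2>0$ and therefore $\tr_S(\wti w)\ne 0$.

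For the final assertion of (2), that $\CC^0_{\mathsf{ell}}(W)=\CC_{\mathsf{ell}}(W)$ in the odd-dimensional case, I will argue contrapositively. If a conjugacy class $C_w$ of $W$ fails to split in $\wti W$, then $\wti w$ and $z\wti w$ are conjugate in $\wti W$, so for every genuine $\wti\sigma$ one has $\tr_{\wti\sigma}(\wti w)=\tr_{\wti\sigma}(z\wti w)=-\tr_{\wti\sigma}(\wti w)$, forcing $\tr_S(\wti w)=0$. Combined with the nonvanishing just established for elliptic $\wti w$, every elliptic class of $W$ must split.

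The argument is essentially a direct character computation; the one delicate point is the positivity of each of $\det_V(1\pm p(\wti w))$ when $p(\wti w)$ is orthogonal. Without that observation, cancellations between the two determinants in the odd-dimensional case could destroy the implication in (2), and the same positivity is what makes the spin character a useful detector of elliptic classes in the last paragraph.
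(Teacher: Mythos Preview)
Your argument for part (1) coincides with the paper's. For part (2) your proof is correct but follows a genuinely different route. The paper does not compute $\tr_S(\wti w)^2$ for a single spin module; instead it forms the virtual character $S^+-S^-$ and uses that $S^+\otimes S^+$, $S^-\otimes S^-$ give the even exterior powers while $S^+\otimes S^-$ gives the odd ones, obtaining (up to a factor of~$2$) $\tr_{S^+-S^-}(\wti w)^2=\det_V(1-p(\wti w))$; it then invokes the fact that $S^+$ and $S^-$ are associate to deduce that nonvanishing of this difference forces nonvanishing of each $\tr_{S^\pm}$. Your approach avoids the virtual character entirely: you stay with the single identity $S\otimes S\cong\bigoplus_i\bigwedge^{2i}V$ already recorded in (\ref{Stens}), rewrite the right-hand side as $\tfrac12\bigl(\det_V(1+p(\wti w))+\det_V(1-p(\wti w))\bigr)$, and then use the elementary observation that both determinants are nonnegative for orthogonal $p(\wti w)$, so ellipticity of $\wti w$ alone forces the sum to be strictly positive. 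This is a clean alternative: it uses nothing beyond the one tensor decomposition stated in the paper and a spectral positivity fact, whereas the paper's route is slightly slicker in that it isolates $\det_V(1-p(\wti w))$ directly but tacitly relies on the decomposition of $S^+\otimes S^-$. Your final paragraph deriving $\CC^0_{\mathsf{ell}}(W)=\CC_{\mathsf{ell}}(W)$ from nonvanishing of $\tr_S$ on elliptic elements matches the paper's reasoning (stated just before the lemma).
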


\begin{proof}If $\dim V$ is even, and $S$ is
the spin module, we see by (\ref{Stens}) that $\tr_{S}(\wti
w)^2=\tr_{\bigwedge^\bullet V}(p(\wti w))=\det_V(1+p(\wti w))$, and
this proves (1). If $\dim V$ is odd, and $S^+,S^-$ are the two spin
modules, (\ref{Stens}) implies that $\tr_{S^+-S^-}(\wti
w)^2=2\oplus_{i=0}^{\dim V}(-1)^i\tr_{\bigwedge^i V}(p(\wti
w))=\det_V(1-p(\wti w))$. Since $S^+$ and $S^-$ are associate,
$\tr_{S^+-S^-}(\wti w)\neq 0$ implies $\tr_{S^\pm}(\wti w)\neq 0$, and
 this proves (2).
\end{proof}

\begin{remark}\label{r:ellclasses}
In type $A_{n-1}$, there is a single elliptic conjugacy class,
consisting of the $n$-cycles in $S_n$, and it is easy to check
directly that it splits in $\wti S_n.$ Theorem
4.1 and Lemma 6.4 in \cite{Re} for $B_n$ and $D_n$ respectively, and
sections 6--9 in \cite{Mo2} for the exceptional groups, show that if $\dim V$
is even, the
split elliptic conjugacy classes are precisely the ones on which $S$
does not vanish.
In terms of the classification of elliptic elements of $W$ from Carter
\cite{Ca}, when $\dim V$ is even, the set $\CC^0_{\mathsf{ell}}(W)\neq \CC_{\mathsf{ell}}(W)$ is explicitly as follows:
\begin{enumerate}
\item in type $B_{2n}$, the elliptic conjugacy classes corresponding to partitions of $2n$ with only even parts;
\item in type $D_{2n}$, the elliptic conjugacy classes corresponding
  to partitions of $2n$ with only even parts, or partitions with only
  odd parts and multiplicity one;
\item in $G_2$: $\{A_2,G_2\}$;
\item in $F_4$: $\{A_2+\wti A_2, D_4(a_1), B_4, F_4, F_4(a_1)\}$;
\item in $E_6$: $\{3A_2,E_6,E_6(a_1),E_6(a_3)\}$;
\item in $E_8$: $\{A_8,2A_4,4A_2,D_8(a_1),D_8(a_2),2D_4(a_1),E_6(a_2)+A_2,E_6+A_2,E_8,\\E_8(a_1),E_8(a_2),E_8(a_3),E_8(a_4),E_8(a_5),E_8(a_6),E_8(a_7),E_8(a_8)\}.$
\end{enumerate}
\end{remark}
Let $S$ denote a spin $\wti W$-module. One can consider the linear map: 
\begin{equation}
\iota_{\C S}: \C R(W)\longrightarrow \C R_{\mathsf{gen}}(\wti W),\quad
\iota_{\C S}(\sigma)=\sigma\otimes S.
\end{equation}
Since the genuine $\wti W$-types are determined by their values on
$p^{-1}(\CC^0(W))$, the map $\iota_S$ is
surjective if and only if $\tr_S$ does not vanish on any conjugacy
class in $p^{-1}(\CC^0(W))$. Using Lemma \ref{l:charspin} and \cite{Re,Mo2} again, we see that:

\begin{lemma}\label{l:surj}
The map $\iota_S$ is surjective if and only if $W$ is of type $B_n,$ $D_n$, $G_2$, $F_4$, or $E_8.$
\end{lemma}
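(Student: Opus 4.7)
The plan is to use the reformulation stated in the paragraph preceding the lemma: $\iota_S$ is surjective if and only if $\tr_S$ does not vanish on any element of $p^{-1}(C)$ for $C\in\CC^0(W)$. Indeed, the genuine $\wti W$-class functions are determined by their values on $p^{-1}(\CC^0(W))$, and any such function arises as $\sigma\otimes S$ for some $\sigma\in \C R(W)$ precisely when pointwise division by $\tr_S$ is well-defined on those lifts.

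Combining Lemma \ref{l:charspin} with (\ref{Stens}), and using that the eigenvalues of any $w\in W$ on $V$ are roots of unity occurring in complex-conjugate pairs (so that $\det_V(1\pm w)\geq 0$), one derives an explicit vanishing criterion: $\tr_S(\wti w)=0$ if and only if $w$ admits $-1$ as an eigenvalue on $V$ (when $\dim V$ is even), respectively $w$ admits both $+1$ and $-1$ as eigenvalues on $V$ (when $\dim V$ is odd). The problem thus reduces to asking whether any such ``bad'' element lies in a split conjugacy class.

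The remaining argument is a case-by-case examination of irreducible crystallographic root systems, using the classifications of split conjugacy classes in \cite[Theorem 4.1, Lemma 6.4]{Re} for $B_n$, $D_n$ and \cite[Sections 6--9]{Mo2} for the exceptional groups; Remark \ref{r:ellclasses} records the elliptic subset. For $B_n$, $D_n$, $G_2$, $F_4$, and $E_8$, direct inspection shows that no split class contains an element with the bad eigenvalue configuration above. For the remaining types, one exhibits an obstruction. In type $A_{n-1}$ (excluding the coincidences $A_1\simeq B_1$ and $A_3\simeq D_3$ which fall under the previous cases), Schur's classical criterion states that a conjugacy class of $S_n$ with cycle type having distinct parts splits in $\wti S_n$; taking cycle type $(n-1,1)$ when $n\geq 3$ is odd or $(n-2,2)$ when $n\geq 6$ is even produces a split class whose elements exhibit the bad eigenvalue configuration on $V$, forcing $\tr_S=0$. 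For $E_6$ and $E_7$, the offending split class can be read off from the character data in \cite{Mo2}.

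The principal obstacle is the combinatorial verification of the ``good'' direction, particularly for the odd-rank instances of $B_n$ and $D_n$, where Lemma \ref{l:charspin}(2) alone does not suffice and one must check that among all split classes classified in \cite{Re}, none contains an element with both $+1$ and $-1$ as eigenvalues on $V$. An analogous finite verification is needed for the exceptional good cases via the explicit character tables in \cite{Mo2}.
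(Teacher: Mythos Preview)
Your approach is the same as the paper's: reduce surjectivity of $\iota_S$ to the nonvanishing of $\tr_S$ on lifts of split classes, invoke Lemma~\ref{l:charspin}, and then verify case by case via \cite{Re,Mo2}. Your derivation of the sharper odd-rank criterion $\tr_S(\wti w)=0\Leftrightarrow\{+1,-1\}\subset\mathrm{Spec}_V(p(\wti w))$ from \eqref{Stens} is correct and useful.

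There is, however, a concrete error in your type~$A$ analysis for even $n$. Schur's splitting criterion is \emph{not} that distinct cycle type suffices: the class $C_\lambda\subset S_n$ splits in $\wti S_n$ if and only if $\lambda$ has all odd parts, or $\lambda$ has distinct parts \emph{and} $n-\ell(\lambda)$ is odd (equivalently, an odd number of even parts). Your proposed obstruction $(n-2,2)$ for even $n\ge 6$ has two even parts, hence $n-\ell(\lambda)=n-2$ is even and the class does \emph{not} split; so it cannot witness failure of surjectivity. A correct replacement is $(n-3,2,1)$ for even $n\ge 6$: this has distinct parts, one even part (so $n-\ell(\lambda)=n-3$ is odd, hence split), and on $V$ the element has eigenvalue $-1$ from the $2$-cycle and eigenvalue $+1$ from the multiplicity of fixed directions, giving the required bad configuration in odd rank. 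With this fix (and noting that $A_1\simeq B_1$, $A_3\simeq D_3$ genuinely fall under the surjective cases, while $A_2$ is handled by your odd-$n$ example $(2,1)$), the type~$A$ argument goes through.
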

Since $\C R_{\mathsf{ind}}(W)$ (resp. $\C R_{\mathsf{gen,ind}}(\wti W)$) can be identified with the vector subspace of virtual characters that vanish on $W_{\mathsf{ell}}$ (resp. $\wti W_{\mathsf{ell}}$), we have
\begin{equation}
\iota_S(\C R_{\mathsf{ind}}(W))\subset \C R_{\mathsf{gen,ind}}(\wti W),
\end{equation}
 and so $\iota_S$ gives a linear map
\begin{equation}
\overline\iota_S:\overline{\C R}(W)\to \overline{\C R}_{\mathsf{gen}}(\wti W).
\end{equation}

\begin{proposition}\label{l:surjbar}
The map $\overline\iota_S$ is surjective.
\end{proposition}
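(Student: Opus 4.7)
The plan is to prove surjectivity at the level of characters on elliptic elements, using the character-theoretic description of the elliptic quotients. By Proposition 2.2.2 of \cite{R} (or equivalently the fact that induced characters from the subgroups $L$ with $V^L\neq 0$ vanish on elliptic elements), restriction of characters identifies $\overline{\C R}(W)$ with the space of class functions on $W_{\mathsf{ell}}$, and similarly identifies $\overline{\C R}_{\mathsf{gen}}(\wti W)$ with the space of genuine class functions on $\wti W_{\mathsf{ell}}$. Under this identification, $\overline\iota_S$ acts by multiplication of characters: $\overline\iota_S(\sigma)(\wti w)=\sigma(p(\wti w))\cdot \tr_S(\wti w)$ for $\wti w\in\wti W_{\mathsf{ell}}$.

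Next I would observe that a genuine class function $f$ on $\wti W_{\mathsf{ell}}$ automatically vanishes on $p^{-1}(\CC_{\mathsf{ell}}(W)\setminus\CC^0_{\mathsf{ell}}(W))$: on such a non-split class, $\wti w$ and $z\wti w$ are conjugate in $\wti W$, so $f(\wti w)=f(z\wti w)=-f(\wti w)$. Hence it suffices to match characters on $p^{-1}(\CC^0_{\mathsf{ell}}(W))$. Given a genuine virtual character $\wti\tau\in\C R_{\mathsf{gen}}(\wti W)$, define a class function $f$ on $\CC^0_{\mathsf{ell}}(W)$ by
\begin{equation}
f(w)=\frac{\wti\tau(\wti w)}{\tr_S(\wti w)},\quad \wti w\in p^{-1}(w).
\end{equation}
This is well-posed because $\tr_S(\wti w)\neq 0$ on $p^{-1}(\CC^0_{\mathsf{ell}}(W))$: by Lemma \ref{l:charspin} combined with Remark \ref{r:ellclasses}, the split elliptic classes are precisely those on which $\tr_S$ does not vanish (and in the odd case all elliptic classes split). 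The ratio is independent of the choice of lift $\wti w$ since both numerator and denominator change sign under $\wti w\mapsto z\wti w$.

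Then I would extend $f$ arbitrarily to a class function on all of $W_{\mathsf{ell}}$ and invoke surjectivity of the restriction map $\C R(W)\to\{\text{class functions on }W_{\mathsf{ell}}\}$ (which follows from the identification above, since both sides have dimension $|\CC_{\mathsf{ell}}(W)|$) to produce $\sigma\in\C R(W)$ with $\sigma|_{W_{\mathsf{ell}}}=f$. It remains to check that $\iota_S(\sigma)=\sigma\otimes S$ has the same character as $\wti\tau$ on every elliptic element of $\wti W$. On split elliptic classes this holds by construction, since $\sigma(p(\wti w))\cdot \tr_S(\wti w)=f(w)\cdot\tr_S(\wti w)=\wti\tau(\wti w)$. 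On non-split elliptic classes (which can only occur when $\dim V$ is even), $\tr_S(\wti w)=0$ by Lemma \ref{l:charspin}(1), so $(\sigma\otimes S)(\wti w)=0=\wti\tau(\wti w)$, the latter by the vanishing argument applied to $\wti\tau$. Therefore $\iota_S(\sigma)$ and $\wti\tau$ have the same image in $\overline{\C R}_{\mathsf{gen}}(\wti W)$, proving surjectivity of $\overline\iota_S$.

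The main substantive input is the equivalence, recorded in Remark \ref{r:ellclasses}, between the nonvanishing locus of $\tr_S$ and the set of split elliptic classes; everything else is a formal manipulation with the restriction isomorphism between the elliptic quotient and class functions on elliptic elements. No other step looks delicate.
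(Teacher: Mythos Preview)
Your proof is correct and follows essentially the same approach as the paper. The paper's argument is more compressed: it first notes (via Lemma \ref{l:surj}) that for types $B_n,D_n,G_2,F_4,E_8$ already $\iota_S$ itself is surjective, and then observes that in general surjectivity of $\overline\iota_S$ follows once $\tr_S$ is nonzero on every class in $p^{-1}(\CC^0_{\mathsf{ell}}(W))$, which holds for all irreducible $W$ by Lemma \ref{l:charspin} and Remark \ref{r:ellclasses}. Your write-up simply unpacks this last implication via the identification of the elliptic quotients with class functions on elliptic elements; the substantive input (nonvanishing of $\tr_S$ on split elliptic classes) is identical. One cosmetic remark: for the vanishing of $\tr_S$ on non-split elliptic classes you do not need Lemma \ref{l:charspin}(1); it follows directly from the genuineness of $S$, by the same argument you used for $\wti\tau$.
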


\begin{proof}
By Lemma \ref{l:surj}, this is true when $\wti W$ is of type
$B_n,D_n,G_2,F_4,E_8$, since $\iota_S$ is surjective. The conclusion
is implied if $\tr_S$ is nonzero on every conjugacy class in
$p^{-1}(\CC^0_{\mathsf{ell}}(W))$; this turns out to be the case for
every irreducible $W$, by Lemma \ref{l:charspin} and Remark \ref{r:ellclasses}.
\end{proof}

\begin{corollary}
The map $\overline\iota_S$ is an isomorphism if and only if $\dim V$ is odd or $W$ is of type $A.$
\end{corollary}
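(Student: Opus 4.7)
The plan is essentially a dimension count given the preceding machinery. Since Proposition \ref{l:surjbar} asserts that $\overline\iota_S$ is surjective between finite-dimensional vector spaces, it is an isomorphism precisely when $\dim\overline{\C R}_{\mathsf{gen}}(\wti W) = \dim\overline{\C R}(W)$. By (\ref{compare}), this equality holds if and only if $|\CC^0_{\mathsf{ell}}(W)| = |\CC_{\mathsf{ell}}(W)|$, i.e., if and only if every elliptic conjugacy class of $W$ splits in $\wti W$. So the proof reduces to identifying exactly when this splitting is total.

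For the ``if'' direction, I would argue as follows. If $\dim V$ is odd, Lemma \ref{l:charspin}(2) asserts directly that $\CC^0_{\mathsf{ell}}(W) = \CC_{\mathsf{ell}}(W)$. If $W$ is of type $A_{n-1}$, Remark \ref{r:ellclasses} records that the only elliptic conjugacy class consists of the $n$-cycles and that it splits in $\wti S_n$, so the equality again holds. In either situation the map $\overline\iota_S$ is an isomorphism by the reduction above.

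For the ``only if'' direction, I would reduce to the irreducible case (splitting of a product class under the central $\bZ/2$-extension $\wti W \to W$ is determined factorwise), and assume $W$ is irreducible with $\dim V$ even and not of type $A$. The remaining possibilities are $B_{2n}$, $D_{2n}$, $G_2$, $F_4$, $E_6$, and $E_8$, and for each of these, the explicit list in Remark \ref{r:ellclasses} exhibits at least one elliptic conjugacy class that fails to split in $\wti W$. Consequently $|\CC^0_{\mathsf{ell}}(W)| < |\CC_{\mathsf{ell}}(W)|$ in all these cases, so $\overline\iota_S$ cannot be injective.

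The only nontrivial content beyond dimension counting is the case-by-case splitting data, which is supplied by Remark \ref{r:ellclasses}; the only minor obstacle is handling reducible $W$, but this is routine once one observes that elliptic classes in a product $W_1\times W_2$ are products of elliptic classes in the factors, and the splitting behavior descends accordingly.
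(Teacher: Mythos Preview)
Your proof is correct and follows the same approach as the paper: surjectivity from Proposition \ref{l:surjbar} together with the dimension comparison (\ref{compare}), reducing the question to whether every elliptic class of $W$ splits in $\wti W$. You are simply more explicit than the paper's terse proof (which cites only Lemma \ref{l:charspin}) in invoking Remark \ref{r:ellclasses} for the case-by-case check when $\dim V$ is even and $W$ is not of type $A$; your aside on reducible $W$ goes a bit beyond what the paper treats, which works type by type with irreducible $W$.
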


\begin{proof}
This is immediate from Proposition \ref{l:surjbar} by comparing the
dimension of the two spaces as in (\ref{compare}) and Lemma \ref{l:charspin}.
\end{proof}

\subsection{}
 Applying (\ref{mapred}) to this setting, we get a surjective linear
 map $ \C R_{\mathsf{gen}}^{\mathsf{red}}(\wti W)\to \overline{\C R}_{\mathsf{gen}}(\wti
W)$ which preserves the elliptic pairing.
Thus we have constructed two maps:
\begin{equation}\label{chain}
\overline{\C R}(W)\twoheadrightarrow\overline{\C R}_{\mathsf{gen}}(\wti
W)\twoheadleftarrow \C R_{\mathsf{gen}}^{\mathsf{red}}(\wti W).
\end{equation}
Via 
\begin{equation}
\xi: \C R_{\mathsf{gen}}(\wti W)\to  \C R_{\mathsf{gen}}(\wti W), \quad \wti\sigma\mapsto \wti\sigma\otimes\sgn+(-1)^{\dim V}\wti\sigma, 
\end{equation}
we may identify $ \C R_{\mathsf{gen}}^{\mathsf{red}}(\wti W)$ with the
image of $\xi$, i.e., with the subspace of $\C
R_{\mathsf{gen}}(\wti W)$ spanned by $\{\wti\sigma\otimes\sgn+(-1)^{\dim
  V}\wti\sigma\neq 0: \wti\sigma\in \widehat{\wti W}/_\sim\}$ (here we think of
$\widehat{\wti W}/_\sim$ as a system of representatives for the
symmetry classes). This shows that $\dim   \C
R_{\mathsf{gen}}^{\mathsf{red}}(\wti W)=|\widehat{\wti W}/_\sim|$ when
$\dim V$ is even, and $|\widehat{\wti W}/_\sim|-|\{\wti\sigma\in
\widehat{\wti W}: \wti\sigma\otimes\sgn\cong\wti\sigma\}|$ when $\dim V$ is
odd. From \cite{Mo2,Re,Sc}, we see that the dimension of
$|\widehat{\wti W}/_\sim|$ equals:
\begin{equation}\label{dimred}
\begin{aligned}
&A_{n-1}: \text{ the number of partitions of $n$ into distinct
  parts},\\
&B_n:\text{ the number of partitions of $n$},\\
&D_n:\text{ the number of equivalence classes of partitions of $n$
  under transposition},\\
&G_2: \ 3,\quad F_4:\ 9,\quad E_6:\ 6,\quad E_7:\ 13,\quad E_8:\ 30.
\end{aligned}
\end{equation}
In addition, in types $B_{2n+1},D_{2n+1},E_7$, there are no
self-associate $\wti W$-types. In type $A_{2n-1}$, the number of
self-associate $\wti S_{2n}$-types equals the number of partitions of odd
length of $2n$ into distinct parts.   
Comparing (\ref{dimred}) with (\ref{Rlist}), we see  that if $W$ is of type $B_n$, 
  $G_2$, $F_4$, or $E_8$, then we have $\dim \overline{\C R}(W) =\dim
  \C R_{\mathsf{gen}}^{\mathsf{red}}(\wti W)$. (If the type is
  $B_{2n+1}$, then the maps in (\ref{chain}) are both
  isomorphisms.)
\begin{remark}
One can ask if there is a natural linear map
   $\overline{\C R}(W)\to \C
  R^{\mathsf{red}}_{\mathsf{gen}}(\wti W)$, having good properties with respect to
the elliptic pairing. When $\dim V$ is odd, it is easy to check that
such a map is $\sigma\mapsto \frac 1{\sqrt 2}\sigma\otimes (S^+-S^-)$, and that this
map is an injective metric with respect to the elliptic pairing on $ \overline{\C
  R}(W)$ and the standard pairing on $\C
  R_{\mathsf{gen}}(\wti W)$. When $\dim V$ is even, a
  similar construction exists, but instead of $\wti W$, one needs
  to consider $\wti W_{\mathsf{even}}=\{\wti w\in \wti W: \sgn(\wti
    w)=1\}.$ This fits naturally with the theory of the Dirac index
    for graded Hecke algebras and it is considered in \cite{CT}.
\end{remark}

\subsection{The classification of $\wti W$-types} \label{main} The
rest of the section is dedicated to the proof of Theorem
\ref{t:intro}. 

By the Jacobson-Morozov theorem, we know that there is
a one-to-one correspondence between $G$-orbits of nilpotent elements
in $\fg$, and the set $\C T(G)$ of $G$-conjugacy classes of Lie
triples in $\fg$:
\begin{equation}
(e,h,f):\  [h,e]=2e,
  [h,f]=-2f, [e,f]=h.
\end{equation}

\begin{definition}\label{d:solv} 
 Let $\C N_0(\fg)$ denote the set of all nilpotent elements $e$ whose
 centralizer in $\fg$ is a solvable subalgebra. Let $\C T_0(G)$ denote the
 set of $G$-conjugacy classes of triples $(e,h,f)$ such that $e\in \C N_0(\fg).$
\end{definition}

This definition of $\C T_0(G)$ agrees with the one from the
introduction by Proposition 2.4 in \cite{BV}. 
Every quasi-distinguished
 nilpotent element is in $\C N_0(\fg)$, but in types
 $A,D,E_6$, not all $e\in \C N_0(\fg)$ are quasi-distinguished in the sense of Definition \ref{quasi}. For
 example, in
 $sl(n)$, the only quasi-distinguished nilpotent orbit is the regular
 orbit, but $\C N_0(\fg)$ contains every orbit whose Jordan form has all blocks of
distinct sizes.

\subsection{Type $A$}We begin the proof of Theorem \ref{t:intro}. This is a case-by-case verification, combinatorially for classical root systems, and a direct computation for exceptional.

 The starting point is type $A_{n-1}$, $\wti W_n=\wti S_n.$ Let $P(n)$
 be the set of all partitions of $n$, and let $DP(n)$ be the set of
 distinct partitions. If
 $\lambda=(\lambda_1,\lambda_2,\dots,\lambda_m)$ is a partition of
 $n$, written in decreasing order, we denote the length of $\lambda$
 by $\ell(\lambda)=m$. We say that $\lambda$ is even (resp. odd) if
 $n-\ell(\lambda)$ is even (resp. odd). 

It is well-known that every partition $\lambda$ parameterizes a unique
$S_n$-type $\sigma_\lambda$, and this gives a one-to-one
correspondence between $P(n)$ and $\widehat S_n$. The first part of Theorem \ref{t:intro} for $\wti S_n$ is a classical result of Schur.

\begin{theorem}[\cite{Sc}] There exists a one-to-one correspondence 
$$\widehat{\wti S_n}/_\sim\longleftrightarrow DP(n).$$ For every
  even $\lambda\in DP(n)$, there exists a unique
  $\wti\sigma_\lambda\in \widehat{\wti S_n}$, and for every odd
  $\lambda\in DP(n)$, there exist two associate $
  \wti\sigma_\lambda^+,\wti\sigma_\lambda^-\in \widehat{\wti
  S_n}$. The dimension of $\wti\sigma_\lambda$
  or $\wti\sigma_\lambda^\pm$ is
\begin{equation}
2^{[\frac{n-m}2]}\frac{n!}{\lambda_1!\dots\lambda_m!}\prod_{1\le
  i<j\le m}\frac{\lambda_i-\lambda_j}{\lambda_i+\lambda_j}.
\end{equation}
\end{theorem}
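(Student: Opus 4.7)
My plan is to follow Schur's original strategy, organized into three steps: (i) count the irreducible genuine $\wti S_n$-types by identifying the split $S_n$-conjugacy classes; (ii) construct each $\wti\sigma_\lambda$ (or $\wti\sigma_\lambda^\pm$) by induction from a spin Young subgroup; and (iii) extract the dimension from a Frobenius-type character identity. For step (i), every genuine character satisfies $\tr_{\wti\sigma}(z\wti w)=-\tr_{\wti\sigma}(\wti w)$, so standard central-extension character theory yields $|\widehat{\wti S_n}_{\mathsf{gen}}|=|\CC^0(S_n)|$. A direct computation with the Coxeter presentation of Theorem \ref{embed}, applied to a representative product $\wti s_{i_1}\cdots\wti s_{i_k}$ for each cycle type, establishes Schur's splitting criterion (see also \cite{Mo}): $C_\lambda$ splits in $\wti S_n$ iff $\lambda\in DP(n)$ or every part of $\lambda$ is odd. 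A parity refinement of Euler's identity (distinct versus odd partitions) then shows that $|\CC^0(S_n)|$ equals the number of even $\lambda\in DP(n)$ plus twice the number of odd $\lambda\in DP(n)$; this matches the count implied by the statement, and after quotienting by $\sim$ gives exactly $|DP(n)|$ equivalence classes.

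For step (ii), when $\lambda=(n)$ the representation $\wti\sigma_{(n)}$ is (up to equivalence) a spin module from \S\ref{s:spinmod} pulled back along $\tau:\wti S_n\to\mathsf{Pin}(V)$, of dimension $2^{\lfloor(n-1)/2\rfloor}$. For general $\lambda=(\lambda_1,\ldots,\lambda_m)\in DP(n)$, let $\wti S_\lambda:=p^{-1}(S_{\lambda_1}\times\cdots\times S_{\lambda_m})\subset\wti S_n$ and form
\begin{equation*}
\wti\pi_\lambda:=\Ind_{\wti S_\lambda}^{\wti S_n}\bigl(\wti\sigma_{(\lambda_1)}\otimes\cdots\otimes\wti\sigma_{(\lambda_m)}\bigr),
\end{equation*}
with the tensor product taken in the super sense so that it descends to a genuine $\wti S_\lambda$-module. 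Schur's key structural result is that $\wti\pi_\lambda$ has a unique genuine irreducible constituent $\wti\sigma_\lambda$ when $\lambda$ is even, and a pair of associate genuine irreducible constituents $\wti\sigma_\lambda^\pm$ when $\lambda$ is odd; combined with the count in step (i), these exhaust $\widehat{\wti S_n}_{\mathsf{gen}}$.

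Finally, the dimension in step (iii) is extracted from a Frobenius-type identity. The character values of $\wti\sigma_\lambda$ on odd cycle-type classes are encoded by Schur's $Q$-function $Q_\lambda$ via the analogue of the Frobenius formula for spin characters, and specializing at the identity conjugacy class (equivalently, extracting the appropriately normalized coefficient of $p_1^n$ in $Q_\lambda$) produces
\begin{equation*}
\dim\wti\sigma_\lambda = 2^{\lfloor(n-m)/2\rfloor}\frac{n!}{\lambda_1!\cdots\lambda_m!}\prod_{1\le i<j\le m}\frac{\lambda_i-\lambda_j}{\lambda_i+\lambda_j}.
\end{equation*}
The main obstacle is the uniqueness-of-top-constituent assertion of step (ii), together with the claim that distinct $\lambda$ yield inequivalent $\wti\sigma_\lambda$: Schur's original argument handles this by induction on $n$ via the branching $\wti S_{n-1}\subset\wti S_n$ and a sharp degree count, whereas a cleaner modern treatment is provided by the Sergeev superalgebra (or the Hecke--Clifford algebra), where the strict-partition labeling emerges naturally from a super Schur--Weyl duality.
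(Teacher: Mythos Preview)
The paper does not prove this theorem: it is stated as a classical result of Schur and simply cited as \cite{Sc}, so there is no ``paper's own proof'' to compare against. Your outline is a faithful sketch of the classical argument (split-class count, construction by induction from spin Young subgroups, and dimension via Schur $Q$-functions), and you correctly flag the one genuinely nontrivial step---the identification of the distinguished constituent of $\wti\pi_\lambda$---together with the two standard ways of handling it.

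One imprecision worth tightening: $\wti\pi_\lambda$ is typically not irreducible and does not have a \emph{unique} genuine constituent in the literal sense; what is true is that there is a unique constituent not already occurring in $\wti\pi_\mu$ for $\mu$ strictly dominating $\lambda$ (a triangularity statement), and the inductive degree count then forces the bijection with $DP(n)$. Your final paragraph essentially says this, but the phrasing in step~(ii) should be adjusted to match. The combinatorial identity you invoke in step~(i)---that $|\CC^0(S_n)|$ equals the number of even strict partitions plus twice the number of odd strict partitions---is correct and follows from the refinement of Euler's theorem that the number of partitions of $n$ into odd parts with at least one repeated part equals the number of strict partitions $\lambda$ of $n$ with $n-\ell(\lambda)$ odd.
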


Notice that $DP(n)$ precisely parameterizes the set of
quasi-distinguished orbits in type $A_{n-1}$ (the only local systems
of Springer type are the trivial ones here).

In order to simplify the
formulas below, we write
$\wti\sigma_\lambda:=\wti\sigma_\lambda^+\oplus\wti\sigma_\lambda^-$,
if $\lambda$ is an odd partition in $DP(n)$.

The decomposition of the tensor product of an $S_n$-type
$\sigma_\mu$ with the spin representation $S=\wti\sigma_{(n)}$ is
well-known (see \cite[\S9.3]{St} for example). If $\lambda\neq (n)$
(this case has been covered by \S\ref{s:2.5} already), we have:
\begin{equation}
\dim\Hom_{\wti
  S_n}[\wti\sigma_\lambda,\sigma_\mu\otimes\wti\sigma_{(n)}]=\frac
1{\epsilon_\lambda\epsilon_{(n)}} 2^{\frac{\ell(\lambda)-1}2} g_{\lambda,\mu},
\end{equation}
where $\ep_\lambda=1$ (resp. $\ep_\lambda=\sqrt 2$) if $\lambda$ is
even (resp. odd), and $g_{\lambda,\mu}$ are certain Kostka type
numbers (\cite[\S9.3]{St}). In particular,
$g_{\lambda,\lambda}\neq 0$, and this proves  (2) in Theorem \ref{t:intro}.

In order to verify claim (1) of Theorem \ref{t:intro}, we need a formula for the character of
$\wti\sigma_\lambda$ on the conjugacy class represented by the cycle
$(123)$ in $\wti S_n$. This may be well-known, but I could not
find a reference, so I include a combinatorial proof.

\begin{lemma}\label{l:SnA2} If $\lambda=(\lambda_1,\dots,\lambda_m)$ is a partition
  in $DP(n)$, $n\ge 3$, then we have
\begin{equation}
|C^{S_n}_{(123)}|\frac
{\tr_{\wti\sigma_{\lambda}}((123))}{\dim\wti\sigma_{\lambda}}=\sum_{i=1}^m\frac{\lambda_i(\lambda_i^2-1)}6-{n\choose
2},
\end{equation}
where $C^n_{(123)}$ denotes the conjugacy class of the cycle $(123)$
in $\wti S_n$.
\end{lemma}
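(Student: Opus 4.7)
The plan is to prove Lemma \ref{l:SnA2} by a direct combinatorial computation, applying the spin analogue of the Murnaghan--Nakayama rule for characters of $\wti S_n$ due to Morris. Specialized to the cycle type $(3, 1^{n-3})$, this rule expresses
\begin{equation*}
\tr_{\wti\sigma_\lambda}((123)) = \sum_\nu \pm\, 2^{e(\lambda,\nu)} \dim \wti\sigma_\nu,
\end{equation*}
where $\nu \in DP(n-3)$ ranges over the strict partitions obtained from $\lambda$ by removing a shifted bar of length $3$, and the sign and power of $2$ are determined by the leg length and the spin type of the bar.

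First I would enumerate the length-$3$ shifted bars in $\lambda = (\lambda_1 > \cdots > \lambda_m)$: three contiguous boxes removed from the end of a single row $i$ (requiring $\lambda_i \ge 3$ and $\lambda_i - 3 \notin \{\lambda_j : j \neq i\}$, together with the terminal case $\lambda_i = 3$), plus the two-row configurations arising for appropriate gaps $\lambda_i - \lambda_{i+1}$. For each removal $\lambda \rightsquigarrow \nu$, the ratio $\dim \wti\sigma_\nu / \dim \wti\sigma_\lambda$ can be read off from Schur's dimension formula and simplifies to a product of elementary rational factors in the parts $\lambda_j$.

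Substituting these ratios into the Morris recursion and multiplying through by $|C^{S_n}_{(123)}| = n(n-1)(n-2)/3$, the desired identity becomes a polynomial identity in $\lambda_1, \ldots, \lambda_m$. The telescoping behavior of the factor $\prod_{i<j}(\lambda_i - \lambda_j)/(\lambda_i + \lambda_j)$ under row-length shifts should force the cross-terms between different hook removals to cancel, collapsing the sum to the closed form $\sum_i \lambda_i(\lambda_i^2 - 1)/6 - \binom{n}{2}$.

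The principal obstacle is the bookkeeping at the end: keeping careful track of the signs and powers of $2$ contributed by each bar type, and then verifying the resulting algebraic identity. A useful sanity check is $\lambda = (n)$, where $\wti\sigma_{(n)} = S$ is the spin module itself; in this case the Clifford-algebra identity $\tr_S(f_\alpha f_\beta) = -\cos(\alpha,\beta) \dim S$ combined with $|\cos(\alpha,\beta)| = 1/2$ for any pair generating an $A_2$ subsystem immediately gives $\tr_S((123)) = \tfrac{1}{2}\dim S$, and the right-hand side of the lemma then evaluates to $n(n-1)(n-2)/6$, matching the left-hand side.
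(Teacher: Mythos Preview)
Your approach is viable in principle but genuinely different from the paper's, and as written it stops short of the actual work. The paper argues by induction on $n$: restriction to $\wti S_{n-1}$ gives $\tr_{\wti\sigma_\lambda}((123)) = \sum_i \tr_{\wti\sigma_{\lambda^i}}((123))$ with $\lambda^i$ obtained by decreasing $\lambda_i$ by one; applying the inductive hypothesis together with Schur's dimension ratios reduces the claim to a single rational identity in the $\lambda_j$, namely
\[
\sum_{i=1}^m \lambda_i^2(\lambda_i-1)\prod_{j\neq i}\frac{(\lambda_i-\lambda_j-1)(\lambda_i+\lambda_j)}{(\lambda_i-\lambda_j)(\lambda_i+\lambda_j-1)}
=\sum_{i=1}^m \lambda_i^2(\lambda_i-1)-\sum_{i\neq j}\lambda_i\lambda_j,
\]
which is then verified by reading off the coefficient of $x^{-2}$ in the Laurent expansion of the auxiliary function $f(x)=(x^2-x)\prod_i \frac{(x-\lambda_i-1)(x+\lambda_i)}{(x-\lambda_i)(x+\lambda_i-1)}$.

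Your plan instead removes a $3$-strip in a single Murnaghan--Nakayama step. This trades the induction for a direct computation, but at a cost: the case analysis is more elaborate (single-row removals subject to the strictness constraint on $\lambda_i-3$, plus the two-row shifted bars with their own signs and powers of $2$), and the polynomial identity you would have to verify at the end is correspondingly messier. You flag this as the principal obstacle, and rightly so---it is the entire content of the proof; saying the cross-terms ``should'' cancel is not yet an argument, and no mechanism analogous to the paper's Laurent trick is offered. The paper's inductive route keeps the combinatorics minimal (one box at a time, a single removal type, no leg-length bookkeeping) and supplies a clean device for closing the identity. Your sanity check at $\lambda=(n)$ is correct but does not substitute for the general verification.
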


\begin{proof}
The proof is by induction on $n$. One can immediately verify this for
$n=3$, and let us assume it holds for $n-1$. By restriction to $\wti
S_{n-1}$, one has
$$\tr_{\wti\sigma_{\lambda}}=\sum_{i=1}^m\tr_{\wti
  \sigma_{\lambda^i}},$$
where $\lambda^i=(\lambda_1,\dots,\lambda_i-1,\dots,\lambda_m)$. One
discards $\lambda^i$ if it is not in $DP(n-1)$. Using the induction
hypothesis, and the ratios $|C^{S_n}_{(123)}|/|C^{S_{n-1}}_{(123)}|=\frac
n{n-3}$, $\dim{\wti\sigma_{\lambda^i}}/\dim{\wti\sigma_\lambda}=\frac
{\lambda_i}n\prod_{j\neq
  i}\frac{(\lambda_i-\lambda_j-1)(\lambda_i+\lambda_j)}{(\lambda_i-\lambda_j)(\lambda_i+\lambda_j-1)}$,
an elementary calculation leads to  the following
curious identity that we need to verify:
\begin{equation}\label{eq:iden} 
\sum_{i=1}^m\lambda_i^2(\lambda_i-1)\prod_{j\neq
  i}\frac{(\lambda_i-\lambda_j-1)(\lambda_i+\lambda_j)}{(\lambda_i-\lambda_j)(\lambda_i+\lambda_j-1)}
=\sum_{i=1}^m \lambda_i^2(\lambda_i-1)-\sum_{i\neq j}\lambda_i\lambda_j.
\end{equation}
A similar identity arises when one proves Schur's dimension formula
by induction (\cite[Proposition 10.4]{HH}). We consider the
function
\begin{equation}\label{fx1}
f(x)=(x^2-x)\prod_{i=1}^m \frac{(x-\lambda_i-1)(x+\lambda_i)}{(x-\lambda_i)(x+\lambda_i-1)},
\end{equation} 
which has the expansion 
\begin{equation}\label{fx2}
f(x)=(x^2-x)+\sum_{i=1}^m A_i-\sum_{i=1}^n \frac{A_i \lambda_i(\lambda_i-1)}{(x-\lambda_i)(x+\lambda_i-1)},
\end{equation}
where $A_i=-2\lambda_i\prod_{j\neq
  i}\frac{(\lambda_i-\lambda_j-1)(\lambda_i+\lambda_j)}{(\lambda_i-\lambda_j)(\lambda_i+\lambda_j-1)}.$
Notice that the coefficient of $x^{-2}$ in the Laurent expansion of
the right hand side of (\ref{fx2}) is precisely $(-2)$ times the left
hand side of (\ref{eq:iden}). Then one verifies (\ref{eq:iden}) easily,
by computing the coefficient of $x^{-2}$ in the Laurent expansion of (\ref{fx1}).
\end{proof}

Using the formula in Lemma \ref{l:SnA2}, we immediately check that
$\wti\sigma_\lambda(\Omega_{\wti
  S_n})=\sum_{i=1}^n\frac{\lambda_i(\lambda_i^2-1)}{3}.$ Here, for
simplicity, we assumed that the roots of type $A_{n-1}$ are the
standard ones. A middle element $h$ of a Lie triple for the nilpotent orbit indexed by the
partition $\lambda$ is, in coordinates,
$\left(-{(\lambda_1-1)},\dots,{(\lambda_1-1)},\dots,
-{(\lambda_m-1)},\dots,{(\lambda_m-1)}\right)$, and now claim (1) in
Theorem \ref{t:intro} is established.

\subsection{Types $B,C$} The nilpotent orbits in $sp(2n)$ and $so(m)$ are parameterized (via an analogue of the Jordan canonical form) by partitions of $2n$ (resp. $m$), where the odd (resp. even) parts occur with even multiplicity. Such an orbit is in $\C N_0(sp(2n))$ (resp. $\C N_0(so(m))$ if and only if the associated partition has only even (resp. odd) parts, and all parts have multiplicity at most 2. 

Let $W_n$ denote the Weyl group of type $B_n/C_n$. The group $W_n$ is
a semidirect product $W_n=S_n\rtimes (\bZ/2\bZ)^n$, and therefore, as
it is well-known, its representations are obtained by Mackey
theory. More precisely, let
$\chi_k=(\triv\boxtimes\dots\boxtimes\triv)^{n-k}\boxtimes
(\sgn\boxtimes\dots\boxtimes\sgn)^k$ be a character of $(\bZ/2\bZ)^n$,
and let $S_{n-k}\times S_k$ be the isotropy group of $\chi_k$ in
$S_n$. For every partitions $\lambda$ of $n-k$ and $\mu$ of $k$, one
constructs an irreducible $W_n$ representation
$\sigma_{(\lambda,\mu)}$ as 
\begin{equation}\label{indBn}
\sigma_{(\lambda,\mu)}=\Ind_{S_{n-k}\times S_k\times
  (\bZ/2\bZ)^n}^{W_n}(\sigma_\lambda\boxtimes \sigma_\mu\boxtimes \chi_k).
\end{equation}
This gives a bijection
\begin{equation}
\widehat W_n\longleftrightarrow BP(n),\quad
\sigma_{(\lambda,\mu)}\leftrightarrow (\lambda,\mu),
\end{equation}
where $BP(n)$ is the set of bipartitions of $n$.
In particular, in this notation, if $\lambda\in P(n)$, the representation
$\sigma_{(\lambda,\emptyset)}$ is obtained from the $S_n$-type
$\sigma_{\lambda}$, by letting the simple reflections of $W_n$ not in $S_n$
act by the identity.

Let $\wti W_n$ denote the spin cover of $W_n$. The genuine representations of $\wti W_n$
were classified by \cite{Re}, starting with the classification for
$\wti S_n$-types.

\begin{theorem}[{\cite[Theorem 5.1]{Re}}]\label{t:Bn} There is a one-to-one
  correspondence 
$$\widehat{(\wti W_n)}_{\mathsf{gen}}/_\sim\longleftrightarrow P(n).$$
For every $\lambda\in P(n)$, there exist:

\begin{enumerate}
\item one irreducible $\wti W_n$-type $\wti
  \sigma_{\lambda}=\sigma_{(\lambda,\emptyset)}\otimes S,$ if $n$ is even;
\item two associate $\wti W_n$-types $\wti
  \sigma_{\lambda}^{\pm}=\sigma_{(\lambda,\emptyset)}\otimes S^{\pm},$ if $n$ is odd.
\end{enumerate} 
\end{theorem}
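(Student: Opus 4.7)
The plan is to verify that the representations $\sigma_{(\lambda,\emptyset)} \otimes S$ (respectively $\sigma_{(\lambda,\emptyset)} \otimes S^\pm$ for $n$ odd) are irreducible genuine $\wti W_n$-modules, that they are pairwise non-isomorphic modulo $\sim$, and that they exhaust $\widehat{\wti W_n}_{\mathsf{gen}}$ by a dimension count. Since $\sigma_{(\lambda,\emptyset)}$ factors through $W_n \twoheadrightarrow S_n$, it is a non-genuine $\wti W_n$-type, so tensoring with a spin module does produce a genuine representation.

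To establish irreducibility and pairwise non-isomorphism, I would compute
\begin{equation*}
\dim\Hom_{\wti W_n}(\sigma_{(\lambda,\emptyset)} \otimes S, \sigma_{(\mu,\emptyset)} \otimes S) = \dim\Hom_{W_n}(\sigma_{(\lambda,\emptyset)} \otimes \sigma_{(\mu,\emptyset)}, S \otimes S^*).
\end{equation*}
By self-duality of the (unitary) spin module together with (\ref{Stens}), $S \otimes S^*$ is isomorphic as a $W_n$-representation to $\bigwedge^\bullet V$ when $n$ is even, and to $\bigoplus_{i\ge 0}\bigwedge^{2i}V$ when $n$ is odd. Using the induction formula (\ref{indBn}) and Frobenius reciprocity (together with $V|_{S_n} \cong \triv \oplus V_{n-1}$), this reduces to a branching calculation in $\C R(S_n)$ yielding $\delta_{\lambda,\mu}$. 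For $n$ odd, an analogous computation involving $S^+ \otimes S^+$, $S^- \otimes S^-$, and $S^+ \otimes S^-$ separates $\sigma_{(\lambda,\emptyset)} \otimes S^+$ from its associate $\sigma_{(\lambda,\emptyset)} \otimes S^-$.

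The relationship with $\sim$ is as follows. When $n$ is even, $S$ is self-associate (section \ref{s:2.5}) and $\sigma_{(\lambda,\emptyset)} \otimes \sgn \cong \sigma_{(\emptyset,\lambda^t)}$, so $\sigma_{(\emptyset,\lambda^t)} \otimes S \cong \sigma_{(\lambda,\emptyset)} \otimes S$; this explains why the partition set $P(n)$ (rather than all bipartitions $BP(n)$) appears as indexing data modulo $\sim$. When $n$ is odd, $S^\pm$ are associate but non-isomorphic, giving two associate representations per $\lambda$. Exhaustion follows from the dimension identity
\begin{equation*}
\sum_{\lambda \in P(n)} m_n \cdot \bigl(\dim\sigma_\lambda \cdot 2^{[n/2]}\bigr)^2 = 2^n \cdot n! = |W_n|,
\end{equation*}
with $m_n = 1$ if $n$ is even and $m_n = 2$ if $n$ is odd; this matches the sum of squared dimensions of genuine $\wti W_n$-types, namely $|\wti W_n| - |W_n| = |W_n|$.

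The main obstacle will be the branching calculation: showing that $\sigma_{(\lambda,\emptyset)} \otimes \sigma_{(\mu,\emptyset)} \otimes \bigwedge^\bullet V$ contains the trivial $W_n$-representation precisely $\delta_{\lambda,\mu}$ times. This amounts to a combinatorial identity about decomposing tensor products of $S_n$-representations into exterior powers of the standard reflection representation, and is the technical heart of Reeder's original argument in \cite{Re}.
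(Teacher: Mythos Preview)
The paper does not prove this theorem; it is quoted verbatim from Read \cite{Re} and used as a black box. So there is no ``paper's own proof'' to compare against, and your proposal is in fact an independent argument.

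Your strategy is correct, but you have misjudged where the difficulty lies. The branching calculation you call the ``technical heart'' is nearly trivial in type $B_n$, precisely because $\sigma_{(\lambda,\emptyset)}$ and $\sigma_{(\mu,\emptyset)}$ are pulled back from $S_n$ and hence have trivial restriction to the normal subgroup $(\bZ/2\bZ)^n\subset W_n$. On the other hand, $\bigwedge^k V$ restricted to $(\bZ/2\bZ)^n$ is a sum of characters each supported on exactly $k$ sign factors, so it contains the trivial $(\bZ/2\bZ)^n$-character only when $k=0$. Hence
\[
\dim\Hom_{W_n}\bigl(\sigma_{(\lambda,\emptyset)}\otimes\sigma_{(\mu,\emptyset)},\,\textstyle\bigwedge^\bullet V\bigr)
=\dim\Hom_{W_n}\bigl(\sigma_{(\lambda,\emptyset)}\otimes\sigma_{(\mu,\emptyset)},\,\triv\bigr)
=\dim\Hom_{S_n}(\sigma_\lambda,\sigma_\mu)=\delta_{\lambda,\mu},
\]
with no further combinatorics needed. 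The same observation, together with $S^+\otimes\sgn\cong S^-$, shows immediately that $(S^+)^*\otimes S^-$ decomposes into odd exterior powers and hence contributes zero, separating $\wti\sigma_\lambda^+$ from $\wti\sigma_\lambda^-$ when $n$ is odd. Your dimension count for exhaustion is correct as written.

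Two minor corrections: the reference \cite{Re} is Read, not Reeder; and your sentence about $\sim$ and $BP(n)$ versus $P(n)$ is muddled. The indexing by $P(n)$ comes directly from the construction $\lambda\mapsto\sigma_{(\lambda,\emptyset)}\otimes S$, not from quotienting by $\sim$; the role of $\sim$ is only to identify $\wti\sigma_\lambda^+$ with $\wti\sigma_\lambda^-$ when $n$ is odd (and to record that $\wti\sigma_\lambda$ is self-associate when $n$ is even).
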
 

This realization is very convenient for computing the character of
$\wti\sigma_\lambda$ (and similarly $\wti\sigma_\lambda^\pm$). Using the character of $S$ and the characters of
type $A_{n-1}$, we immediately see that:
\begin{equation}
\frac{\tr_{\wti\sigma_\lambda}(\wti s_\al
  \wti s_\beta)}{\dim{\wti\sigma_\lambda}}=|\cos(\al,\beta)|\left\{\begin{matrix}\frac{\tr_{\sigma_\lambda}((123))}{\dim\sigma_\lambda},&\text{
  if }\al,\beta\text{ form an }A_2\\ \frac{\tr_{\sigma_\lambda}((12))}{\dim\sigma_\lambda},&\text{
  if }\al,\beta\text{ form an }B_2/C_2\end{matrix}\right..
\end{equation}
The relevant formulas for $\tr\sigma_\lambda$ in $S_n$ go back to Frobenius. In the
form that we need, they are:
\begin{equation}
|C_{(123)}^{S_n}|\frac
{\tr_{\sigma_\lambda}((123))}{\dim\sigma_\lambda}=p_2(\lambda)-{n\choose
  2},\quad |C_{(12)}^{S_n}|\frac
{\tr_{\sigma_\lambda}((12))}{\dim\sigma_\lambda}=p_1(\lambda),
\end{equation}
where $p_k(\lambda)$ is the sum of $k$ powers of the content of
$\lambda$.

The formula for $\wti\sigma_\lambda(\Omega_{\wti W})$ becomes
very explicit. Assume for simplicity of notation that we take the
standard Bourbaki coordinates for roots for type $B$ and $C$. Set 
\begin{equation}
\epsilon=1,\text{ if }\Phi=B_n, \quad \epsilon=\frac 12,\text{ if } \Phi=C_n.
\end{equation}
Then, we have
\begin{equation}
\wti\sigma_\lambda(\Omega_{\wti W})=4p_2(\lambda+\epsilon),
\end{equation}
where $p_2(\lambda+\epsilon)$ denotes the sum of squares of the
$\epsilon$-content of $\lambda$, i.e., the content of the $(i,j)$-box
is $j-i+\epsilon$.

The set of all contents of $\lambda+\epsilon$ (with repetitions)
represents the coordinates of one half of 
the middle element of a nilpotent orbit $\C O_{\lambda,\ep}$
in
$so(2n+1)$ (if $\ep=1$), respectively $sp(2n)$ (if $\ep=1/2$). Moreover, the nilpotent orbit lies in fact in $\C N_0(\fg).$ (See \cite[\S4.4]{CK2} for the combinatorics needed to verify this claim.)

The following algorithm (due to Slooten) attaches to each partition $\lambda$ with
content $\lambda+\epsilon$ a set of bipartitions. All the $W_n$-types parameterized by these bipartitions are Springer representations for the nilpotent orbit $\C O_{\lambda,\ep}$. This can be seen by comparing Slooten's algorithm with Lusztig's algorithm with S-symbols for the Springer correspondence (\cite[\S4.4]{CK2}). Moreover, the relation between this
algorithm and the elliptic representation theory of $W$ (via the elliptic representation theory of the graded affine Hecke algebra attached to $\Phi$) is part of \cite{CK1}.

\medskip

\noindent
{\bf Algorithm} (\cite{Sl}). Start with an empty bipartition
$(\mu,\mu')$, $\mu=\emptyset$, $\mu'=\emptyset$. Locate the largest
content in absolute value in $\lambda+\ep$. This could appear in the
last box of the first row or the last box of the first column. Assume
first that these two entries are distinct. If the largest content is
in the first row, remove the row from $\lambda$ and put its length in
$\mu$. If the largest content is in the first column, remove the
column from $\lambda$ and put its length in $\mu'$. Continue with the
remaining (smaller) partition. 

If at this step there was an ambiguity, namely the largest entry in
absolute value appeared twice, start two cases, and proceed
in each one of them separately as above.  

If at the last step, we are left with a single box, treat it as a row
if its content is nonnegative, and as a column if its content is
negative.

It is apparent that the number of $W$-types that the algorithm returns
is always a power of $2$. Moreover, it is known that the algorithm
returns a unique $W_n$-type if and only if the associated nilpotent
orbit is distinguished.
 
\medskip

Let $\sigma_{(\mu,\mu')}$ be one of the $W_n$-types returned by the
algorithm. It remains to check that the multiplicity of $\wti\sigma_\lambda$ in $\sigma_{(\mu,\mu')}\otimes S$ is nonzero (claim (2) of
Theorem \ref{t:intro}). We have:
\begin{equation}
\Hom_{\wti W}[\wti\sigma_\lambda,\sigma_{(\mu,\mu')}\otimes
S]=\Hom_{\wti W}[\sigma_{(\lambda,\emptyset)}\otimes
S,\sigma_{(\mu,\mu')}\otimes S]=\Hom_W[\bigwedge^\bullet
V,\sigma_{(\lambda,\emptyset)}\otimes \sigma_{(\mu,\mu')}].
\end{equation}
In bipartition notation, we have $\bigwedge^k
V=\sigma_{((n-k),(1^k))}$. We claim that $\bigwedge^k
V$, where $k=|\mu|$, occurs in $\sigma_{(\lambda,\emptyset)}\otimes
\sigma_{(\mu,\mu')}.$ To see this, notice that (\ref{indBn}) implies:
\begin{align*}
\sigma_{(\lambda,\emptyset)}\otimes\sigma_{((n-k),(1^k))}&=\sigma_{(\lambda,\emptyset)}\otimes\Ind_{S_{n-k}\times
S_k\times (\bZ/2\bZ)^n}^{W_n}(\triv\boxtimes\sgn\boxtimes
(\triv^{n-k}\boxtimes\sgn^k))\\
&=\Ind_{S_{n-k}\times S_k\times
  (\bZ/2\bZ)^n}^{W_n}(\sigma_\lambda|_{S_{n-k}\times S_k}\otimes
(\triv\boxtimes\sgn)\boxtimes (\triv^{n-k}\boxtimes\sgn^k)).
\end{align*}
From the construction of $\mu$ (using the rows of $\lambda$) and
$\mu'$ (using the columns of $\lambda$), and the induction/restriction
rules in $S_n$, we have:
\begin{equation*}
\sigma_\mu\boxtimes\sigma_{{\mu'}^t}\hookrightarrow
\sigma_\lambda|_{S_{n-k}\times S_k},
\end{equation*}
where ${\mu'}^t$ denotes the transpose partition of $\mu'$. Since
$\sigma_{{\mu'}^t}=\sigma_{\mu'}\otimes\sgn$, as
$S_k$-representations, the claim follows by applying (\ref{indBn}) again.

\subsection{Type $D$} Let $W(D_n)$ denote the Weyl group of type $D_n$. This is a subgroup of $W_n$ of index 2. An irreducible $W_n$-representation $\sigma_{(\lambda,\mu)}$ restricts to a representation of $W(D_n)$ which is:

\begin{itemize}
\item irreducible if $\lambda\neq\mu$;
\item a sum of two inequivalent irreducible $W(D_n)$-types if $\lambda=\mu$. 
\end{itemize}

Moreover, $\sigma_{(\lambda,\mu)}$ and $\sigma_{(\mu,\lambda)}$, $\lambda\neq\mu$, restrict to the same $W(D_n)$-representation. In our case, the only $W(D_n)$-types that we consider are those associated via Springer's correspondence with nilpotent orbits in $\C N_0(D_n)$. In the Jordan form partition notation (\cite{Ca2}), these are the orbits indexed by partitions of $2n$ where all parts are odd and appear with multiplicity at most $2$. It turns out that for every Springer representation attached to one of these orbits, the bipartition $(\lambda,\mu)$ has the property that  $\lambda\neq\mu.$

\smallskip

The classification of $\widehat{\wti W(D_n)}_{\mathsf{gen}}$ is obtained from Theorem \ref{t:Bn}. We define the equivalence relation $\sim$ on $P(n)$: $\lambda\sim \lambda^t$. This is the combinatorial equivalent of the relation $\sim$ on $\widehat S_n$: $\sigma_\lambda\sim \sigma_{\lambda}\otimes \sgn=\sigma_{\lambda^t}.$

\begin{theorem}[{\cite[\S7,8]{Re}}]\label{t:Dn} There is a one-to-one correspondence 
$$\widehat{\wti W(D_n)}_{\mathsf{gen}}/_\sim\longleftrightarrow P(n)/_\sim.$$
More precisely, recall the irreducible representations $\sigma_\lambda$ of $S_n$ and $\wti\sigma_\lambda$ of $\wti W_n$. We have:
\begin{enumerate}
\item If $n$ is odd, every irreducible genuine $\wti W_n$-representations $\wti\sigma_\lambda$ restricts to an irreducible $\wti W(D_n)$-representation, and this gives a complete set of inequivalent irreducible genuine $\wti W(D_n)$-representations. Moreover, $\wti\sigma_\lambda$ and $\wti\sigma_{\lambda^t}$ are associate as $\wti W(D_n)$-representations.
\item If $n$ is even, and if $\lambda=\lambda^t$, then $\wti\sigma_\lambda$ restricts to a sum of two associate irreducible $\wti W(D_n)$-type; if $\lambda\neq \lambda^t,$ then $\wti\sigma_\lambda$ restricts to an irreducible $\wti W(D_n)$-representation.
\end{enumerate}
\end{theorem}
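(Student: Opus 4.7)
The plan is to reduce the theorem to Theorem \ref{t:Bn} by applying Clifford theory to the index-two inclusion $\wti W(D_n)\subset\wti W_n$. Let $\epsilon$ denote the linear character of $W_n$ whose kernel is $W(D_n)$ (trivial on $S_n$, the product sign on $(\bZ/2\bZ)^n$), viewed also as a character of $\wti W_n$ via $p$; the nontrivial character of the quotient $\wti W_n/\wti W(D_n)\cong\bZ/2\bZ$ will drive the analysis.

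First I would record two tensor identities for $W_n$-types, both immediate from the induction formula (\ref{indBn}) together with Frobenius reciprocity:
\begin{equation*}
\sigma_{(\lambda,\mu)}\otimes\epsilon=\sigma_{(\mu,\lambda)},\qquad \sigma_{(\lambda,\mu)}\otimes\sgn_{W_n}=\sigma_{(\mu^t,\lambda^t)}.
\end{equation*}
Combining these with the fact from \S\ref{s:2.5} that $\sgn_{W_n}\otimes S\cong S$ when $n$ is even while $\sgn_{W_n}\otimes S^\pm\cong S^\mp$ when $n$ is odd, together with the description $\wti\sigma_\lambda=\sigma_{(\lambda,\emptyset)}\otimes S$ of Theorem \ref{t:Bn}, a short calculation yields
\begin{equation*}
\wti\sigma_\lambda\otimes\epsilon\cong\wti\sigma_{\lambda^t}\ (n\text{ even}),\qquad \wti\sigma_\lambda^{\pm}\otimes\epsilon\cong\wti\sigma_{\lambda^t}^{\mp}\ (n\text{ odd}).
\end{equation*}

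Next I would invoke Clifford theory for the index-two inclusion $\wti W(D_n)\subset\wti W_n$: an irreducible $\wti\sigma\in\widehat{\wti W_n}$ restricts irreducibly to $\wti W(D_n)$ exactly when $\wti\sigma\otimes\epsilon\not\cong\wti\sigma$, in which case $\wti\sigma$ and $\wti\sigma\otimes\epsilon$ give the same irreducible upon restriction; otherwise $\wti\sigma|_{\wti W(D_n)}$ splits as a sum of two inequivalent irreducibles, and every irreducible genuine $\wti W(D_n)$-type arises in exactly one of these ways. When $n$ is odd, the relation $\wti\sigma_\lambda^+\otimes\epsilon=\wti\sigma_{\lambda^t}^-$ is never an isomorphism, so every restriction is irreducible and $\wti\sigma_\lambda^+|_{\wti W(D_n)}=\wti\sigma_{\lambda^t}^-|_{\wti W(D_n)}$, giving part~(1). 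When $n$ is even, irreducibility of the restriction occurs precisely when $\lambda\neq\lambda^t$, and the self-transpose case $\lambda=\lambda^t$ splits as $\tau_\lambda^+\oplus\tau_\lambda^-$, giving part~(2).

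Finally I would verify that the pairs of constituents appearing in each clause are associate under $\sgn_{W(D_n)}=\sgn_{W_n}|_{W(D_n)}$, so that after quotienting by $\sim$ one recovers the index set $P(n)/\!\sim$. For $n$ odd, combining the two tensor identities of the first step yields $\wti\sigma_\lambda^+\otimes\sgn_{W_n}\cong\wti\sigma_\lambda^-$, and restricting to $\wti W(D_n)$ produces the required associate pair; the non-split case for $n$ even is parallel. The hard part, I expect, is the self-transpose case for $n$ even, where $\wti\sigma_\lambda\otimes\sgn_{W_n}\cong\wti\sigma_\lambda$, so $\sgn_{W(D_n)}$ either fixes each summand or swaps them, and Clifford theory alone does not distinguish the two possibilities. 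My plan is to rule out the fixing alternative by a global dimension count: fixing both summands would overshoot $|P(n)/\!\sim|$, contradicting the combinatorial count for type $D_n$ recorded in (\ref{dimred}), so $\sgn_{W(D_n)}$ must swap the two summands in every split case. This count closes the argument and establishes the bijection.
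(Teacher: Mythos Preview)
The paper does not give its own proof of this statement; it is quoted from Read \cite{Re}. Your reduction to Theorem~\ref{t:Bn} via Clifford theory for the index-two inclusion $\wti W(D_n)\subset\wti W_n$ is the natural approach and is essentially Read's. The tensor identities $\sigma_{(\lambda,\mu)}\otimes\epsilon\cong\sigma_{(\mu,\lambda)}$ and $\sigma_{(\lambda,\mu)}\otimes\sgn_{W_n}\cong\sigma_{(\mu^t,\lambda^t)}$ are correct, as is the consequence $\wti\sigma_\lambda\otimes\epsilon\cong\wti\sigma_{\lambda^t}$ (with the appropriate~$\pm$ when $n$ is odd), and Clifford theory then delivers the irreducibility and splitting pattern in parts~(1) and~(2) together with the associate relations in the non-split cases.

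The gap is in your treatment of the final clause: showing that for $n$ even and $\lambda=\lambda^t$ the two summands $\tau_\lambda^+,\tau_\lambda^-$ are \emph{associate} rather than each self-associate. Your dimension-count argument invokes the type-$D_n$ entry of (\ref{dimred}), but that entry is precisely the first line of Theorem~\ref{t:Dn} and is cited in this paper from the same source \cite{Re}; appealing to it here is circular. You need an independent reason why $\tau_\lambda^+\otimes\sgn_{W(D_n)}\cong\tau_\lambda^-$, for instance by exhibiting an odd-length split conjugacy class of $\wti W(D_n)$ on which $\tr_{\tau_\lambda^+}$ does not vanish, or more generally by carrying out the explicit character analysis that Read performs in \cite[\S7--8]{Re}. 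Without this step the bijection $\widehat{\wti W(D_n)}_{\mathsf{gen}}/_\sim\leftrightarrow P(n)/_\sim$ is not yet established.
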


The analysis of the map $\Psi$ is now completely analogous to the cases $B_n/C_n$. The same formulas and algorithm hold with the convention that $\epsilon=0.$

\subsection{Exceptional root systems}\label{sec:exc} The character tables for
exceptional $\wti W$ are in \cite{Mo2}. We reorganize them here so
that the claims in Theorem \ref{t:intro} follow. The notation for
$\wti W$-types is borrowed from \cite{Mo2}.  We put a $*$ next to $\wti
W$-type to indicated that it has an associate $\wti W$-type. We use
Carter's notation for $W$-types and nilpotent orbits (\cite{Ca2}). In each table, we give the correspondences (1), (2) in Theorem \ref{t:intro} between genuine $\wti W$-types, nilpotent orbits, and Springer representations attached to nilpotent orbits. We also give the traces of the characters of the genuine $\wti W$-types on elements of the form $\wti s_\al\wti s_\beta$, where $\al,\beta$ form an $A_2,$ $B_2$, or $G_2$. Using these traces and the sizes of the corresponding conjugacy classes (which are listed in \cite{Ca}), we computed $\wti\sigma(\Omega_{\wti W})$ and verified assertion (1) in Theorem \ref{t:intro}. For tensor product decompositions, we used the package {\it chevie} in the computer algebra system GAP.

\begin{table}[h]
\caption{$G_2$\label{t:G2}}
\begin{tabular}{|c|c|c|c|c|}
\hline
Nilpotent $e\in \C N_0$ &$\sigma_{e,\phi}\in \widehat W$
&$\wti\sigma\in \widehat{\wti W}_{\mathsf{gen}}$ &$\tr_{\wti \sigma}(A_2)$
&$\tr_{\wti \sigma}(G_2)$\\
\hline
$G_2$ &$(1,0)$ &$2_s$ &$1$ &$\sqrt 3$\\
\hline
$G_2(a_1)$ &$(2,1)$ &$2_{sss}$ &$-2$ &$0$\\
          &$(1,3)'$ &$2_{ss}$ &$1$ &$-\sqrt 3$\\
\hline
\end{tabular}
\end{table}

\begin{table}[h]
\caption{$F_4$\label{t:F4}}
\begin{tabular}{|c|c|c|c|c|c|}
\hline
Nilpotent $e\in \C N_0$ &$\sigma_{e,\phi}\in \widehat W$
&$\wti\sigma\in\widehat{\wti W}_{\mathsf{gen}} $ &$\tr_{\wti \sigma}(A^l_2)$ &$\tr_{\wti \sigma}(A^s_2)$
&$\tr_{\wti \sigma}(B_2)$\\
\hline
$F_4$ &$(1,0)$ &$4_s$ &$2$ &$2$ &$2\sqrt 2$\\
\hline
$F_4(a_1)$ &$(4,1)$ &$12_s$ &$0$ &$0$ &$2\sqrt 2$\\
          &$(2,4)''$  &$8_{sss}$ &$4$ &$-2$ &$0$\\  
\hline
$F_4(a_2)$ &$(9,2)$ &$24_s$ &$0$ &$0$ &$0$\\
          &$(2,4)'$  &$8_{ssss}$ &$-2$ &$4$ &$0$\\
\hline
$F_4(a_3)$&$(12,4)$ &$8_{ss}$ &$0$ &$0$ &$-2\sqrt 2$\\
          &$(9,6)'$ &$12_{ss}$ &$-2$ &$-2$ &$0$\\ 
          &$(6,6)''$ &$8_{s}$ &$-2$ &$-2$ &$0$\\
          &$(1,12)'$ &$4_{ss}$ &$2$ &$2$ &$-2\sqrt 2$\\
\hline
\end{tabular}
\end{table}

\begin{table}[h]
\caption{$E_6$\label{t:E6}}
\begin{tabular}{|c|c|c|c|}
\hline
Nilpotent $e\in \C N_0$ &$\sigma_{e,\phi}\in \widehat W$
&$\wti\sigma\in\widehat{\wti W}_{\mathsf{gen}} $ &$\tr_{\wti
  \sigma}(A_2)$\\
\hline
$E_6$ &$(1,0)$ &$8_s$ &$4$\\
\hline
$E_6(a_1)$ &$(6,1)$ &$40_s$ &$8$\\
\hline
$E_6(a_3)$ &$(30,3)$ &$120_s$ &$0$\\
           &$(15,5)$ &$72_s$ &$0$\\
\hline
$D_5$ &$(20,2)$ &$60_s*$ &$6$\\
\hline
$D_5(a_1)$ &$(64,4)$ &$80_s*$ &$-2$\\
\hline
$A_4+A_1$ &$(60,5)$ &$64_s*$ &$-4$\\
\hline
$D_4(a_1)$ &$(80,7),(90,8),(20,10)$    &$40_{ss}$ &$-4$\\
&$(90,8)$ &$20_s*$ &$-2$ \\
           
\hline
\end{tabular}
\end{table}

\begin{table}[h]
\caption{$E_7$\label{t:E7}}
\begin{tabular}{|c|c|c|c|}
\hline
Nilpotent $e\in \C N_0$ &$\sigma_{e,\phi}\in
\widehat W$
&$\wti\sigma\in \widehat{\wti W}_{\mathsf{gen}}$ &$\tr_{\wti
  \sigma}(A_2)$\\
\hline
$E_7$ &$(1,0)$ &$8_s*$ &$4$\\
\hline
$E_7(a_1)$ &$(7,1)$ &$48_s*$ &$12$\\
\hline
$E_7(a_2)$ &$(27,2)$ &$168_s*$ &$24$\\
\hline
$E_7(a_3)$ &$(56,3)$ &$280_s*$ &$20$\\
           &$(21,6)$ &$112_s*$ &$8$\\
\hline
$E_7(a_4)$ &$(189,5)$ &$720_s*$ &$0$ \\
           &$(15,7)$ &$120_s*$ &$0$\\
\hline
$E_7(a_5)$ &$(315,7)$ &$448_{s}*$ &$-4$\\
          &$(280,9)$ &$560_s*$ &$-20$\\
          &$(35,13)$ &$112_{ss}*$ &$-16$\\
\hline
$E_6(a_1)$ &$(120,4),(105,5)$ &$512_s*$ &$16$\\
\hline
$A_4+A_1$ &$(512,11),(512,12)$ &$64_s*$ &$-4$\\
          &$(512,11),(512,12)$ &$64_{ss}*$ &$-4$\\
\hline
\end{tabular}
\end{table}

\begin{table}[h]
\caption{$E_8$\label{t:E8}}
\begin{tabular}{|c|c|c|c|}
\hline
Nilpotent $e\in \C N_0$ &$\sigma_{e,\phi}\in
\widehat W$
&$\wti\sigma\in \widehat{\wti W}_{\mathsf{gen}}$ &$\tr_{\wti
  \sigma}(A_2)$\\
\hline
$E_8$ &$(1,0)$ &$16_s$ &$8$\\
\hline

$E_8(a_1)$ &$(8,1)$  &$112_s$ &$32$\\
\hline

$E_8(a_2)$ &$(35,2)$ &$448_{ss}$ &$80$\\
\hline

$E_8(a_3)$ &$(112,3)$ &$1344_{ss}$ &$168$\\
&$(28,8)$ &$320_s$ &$40$\\
\hline

$E_8(a_4)$ &$(210,4)$ &$2016_s$ &$144$\\
&$(160,7)$ &$1680_s$ &$120$\\
\hline

$E_8(a_5)$ &$(700,6)$ &$5600_{sss}$ &$160$\\
           &$(300,8)$ &$2800_s$ &$80$\\
\hline

$E_8(a_6)$ &$(1400,8)$ &$6480_s$ &$0$\\
&$(1575,10)$ &$9072_s$ &$0$\\
&$(350,14)$ &$2592_s$ &$0$\\
\hline

$E_8(a_7)$ &$(4480,16)$
&$896_{s}$ &$-72$\\
&$(5670,18)$ &$2016_{sss}$ &$-72$\\
&$(4536,18)$ &$2016_{ss}$ &$-48$\\
&$(1680,22)$ &$1344_s$ &$-40$\\
&$(1400,20)$ &$1120_s$ &$-32$\\
&$(70,32)$ &$224_s$ &$-8$\\
\hline

$E_8(b_4)$ &$(560,5)$ &$5600_{ss}$ &$280$\\
 &$(50,8)$ &$800_s$ &$40$\\
\hline

$E_8(b_5)$ &$(1400,7)$ &$6720_s$ &$128$\\ 
&$(1008,9)$ &$7168_s$ &$120$\\
&$(56,19)$ &$448_s$ &$8$ \\
\hline

$E_8(b_6)$ &$(2240,10)$ &$8400_s$ &$-120$\\
&$(840,13)$ &$5600_s$ &$-80$\\
&$(175,12)$ &$2800_{ss}$ &$-40$\\
\hline

$D_5+A_2$ &$(4536,13),(840,13)$ &$4800_s$ &$-120$\\
\hline

$D_7(a_1)$ &$(3240,9),(1050,10)$ &$11200_s$ &$-40$\\

\hline

$D_7(a_2)$ &$(4200,12),(3360,13)$ &$7168_{ss}$ &$-160$\\

\hline

$E_6(a_1)+A_1$ &$(4096,11),(4096,12)$ &$8192_s$ &$-128$\\

\hline
\end{tabular}
\end{table}

\begin{remark}\label{counter} In
$E_6$, there is a nilpotent orbit $D_4(a_1)\subset \C N_0(E_6)$, such
that $A(e)=S_3.$ There are three Springer representations attached to
$D_4(a_1)$: $\sigma_{e,(3)}$, $\sigma_{e,(21)}$, and
$\sigma_{e,(111)}.$ Since the rank is even, there is a single spin
module $S.$ The fiber $\Psi^{-1}(e)$ contains three $\wti W$-types
$\wti\sigma_1$, $\wti \sigma_2,$ $\wti\sigma_3$ of dimensions $40$,
$20$, $20$, respectively. Moreover, $\wti\sigma_2$ and $\wti\sigma_3$
are associate.  We compute that in the decomposition
$\wti\sigma_1\otimes S$ occur all three $\sigma_{e,(3)}$, $\sigma_{e,(21)}$, and
$\sigma_{e,(111)},$ while in the decomposition of $\wti\sigma_2\otimes
S$ (equivalently $\wti\sigma_3\otimes S$) only $\sigma_{e,(21)}$
occurs among the three Springer representations. 
\end{remark}

\subsection{The generalized Springer correspondence}\label{s:gen}
The references for the construction of the generalized Springer
correspondence, and for the results we need to use 
are \cite{L,L3}.

Let $G$ be a simply connected
complex simple group, with a fixed Borel subgroup $B$,
and maximal torus $H\subset B$. The Lie algebras will be denoted by
the corresponding Gothic letter. Fix a nondegenerate $G$-invariant symmetric bilinear form
$\langle~,~\rangle$ on $\fg.$ Denote also by $\langle~,~\rangle$ the
dual form on $\fg^*.$

\begin{definition}
A cuspidal triple for $G$ is a triple $(L,\C C,\C L),$ where $L$ is a
Levi subgroup of $G,$ $\C C$ is a nilpotent $L-$orbit on the Lie
algebra $\fk l$, and $\C L$ is an irreducible $G-$equivariant local
system on $\C C$ which is cuspidal in the sense of \cite{L,L3}. 
\end{definition}

Let us fix a cuspidal triple $(L,\C C,\C L)$, such that $H\subset L,$
and $P=LU\supset B$ is a parabolic subgroup. Let
$T\subset L$ denote the identity component of the center of $L$, with Lie
algebra $\fk t$. Write an orthogonal decomposition with respect to $\langle~,~\rangle$,
$\fh=\fk t +\fk a; $
here $\fk a$ is a Cartan subalgebra for the semisimple part of $\fk
l.$ Let 
\begin{equation}\label{projt}
\pr_{\C C}:\fh \to \fk t
\end{equation}
denote the corresponding projection onto $\fk t.$

Following \cite[\S2]{L3}, we attach to $(L,\C C,\C L)$ an $\bR$-root system
  $\Phi=(V,R,V^\vee,R^\vee)$ and a $W$-invariant function $c:R^+\to\bZ$. Let $R\subset \fk t^*$ be the reduced part of the root system given by
  the nonzero weights of  $\text{ad}(\ft)$ on $\fg$; it can be
  identified with the root system of the reductive part of $Z_G(x),$
  where $x\in \C C$.  Let $V$ be the $\bR$-span of $R$ in $\fk t^*.$  
The Weyl group is 
\begin{equation}
W=N_G(L)/L.
\end{equation}
 This is a Coxeter group due to the particular form $L$
must have to allow a cuspidal local system. 

Let $R^+$ is the subset of $R$ for which the
  corresponding weight space lives in $\fk u$. The simple roots $\Pi=\{\al_i:i\in I\}$ correspond to the Levi
  subgroups $L_i$ containing $L$ maximally: $\al_i$ is the unique
  element in $R^+$ which is trivial on the center of $\fk l_i$.  
For every simple $\al_i$, $c({\al_i})\ge 2$ is defined to be
  the smallest integer  such that
\begin{equation}
\text{ad}(x)^{c({\al_i})-1}:\fk l_i\cap\fk u\to \fk l_i\cap\fk u
\text{ is zero.} 
\end{equation} 
This is a $W$-invariant function on $\Pi$ and we extend it to $R^+.$

The  explicit classification of cuspidal triples (when $G$ is simple),
  along with the corresponding values for the parameters $c(\al)$ can
  be found in the tables of \cite[\S2.13]{L3}.

Define $R^\perp=\{x\in\fk t:~\al(x)=0,\text{ for all }\al\in R\}$, and
let $\fk t'$ be the orthogonal complement of $R^\perp$ in $\fk t$. For every
$\al\in R,$ define $\check\al\in \fk t$ to be the unique element of
$\fk t'$ such that $\al(\check\al)=2.$ Let $R^\vee$ denote the set of
all $\check\al,$ and let $V^\vee$ be the $\bR$-span of $R^\vee$ in
$\fk t'.$

\smallskip

Consider  the variety
\begin{align}
&\dot{\fg}=\{(x,gP)\in \fg\times G/P: ~\text{Ad}(g^{-1})x\in \C
  C+\fk t+\fk u\},
\end{align} 
on which $G\times \bC^\times$ acts via $(g_1,\lambda)$: $x\mapsto
\lambda^{-2}\text{Ad}(g_1)x,$ $x\in \fg,$ and $gP\mapsto g_1gP,$ $g\in
G.$ Let $\pr_1$ and $\pr_2$ be the projections of $\dot{\fg}$ on the
two factors. For every nilpotent element $e\in\fg$, let $\C P_e$
denote the preimage of $\{e\}$ in $\dot{\fg}$ under $\pr_1.$ Via
$\pr_2$, we may make the identification:
$$\CP_e=\{gP:~ Ad(g^{-1})e\in \C C+\fk u\}.$$

Let $\dot{\C L}$ denote the pull-back of the local system $\C L$ on
$\C C$ under the $G\times \bC^\times$-equivariant projection $\dot\fg\to \C C$, $(x,gP)\mapsto \text{Ad}(g^{-1})x$.

Let $\pi_1(e)=Z_G(e)/Z_G(e)^0$ be the fundamental group of $G\cdot e.$ 
The hypercohomology with compact support
$H^\bullet_c(\C P_e,\dot{\C L})$ carries a 
$\pi_1(e)\times W$ action (\cite{L}, see also \cite{L3}). Let $\widehat {A(e)}_\C C$
denote the set of irreducible representations of $\pi_1(e)$ which appear in
this way. Moreover, for $\phi\in\widehat{A(e)}_{\C C}$ we have:
\begin{equation}\label{eq:3.5.3}
\sigma_{(e,\phi)}:=\operatorname{Hom}_{\pi_1(e)}[\phi, H^{2\dim\C P_e}_c(\C P_e,\dot{\C L})]
\end{equation}
is an irreducible $W-$representation. The
correspondence $\sqcup_{e\in G\backslash \C N(\fg)}\widehat {A(e)}_\C C\to \widehat W,$ 
$(e,\phi)\to \sigma_{(e,\phi)}$ is the generalized Springer correspondence
of \cite{L}, and it is a bijection. We normalize it so that
$\sigma_{(e,\phi)}=\sgn,$ when $e\in G\cdot \C C$ (there is single
$\phi$ that appears in that case).

\begin{definition}\label{d:solvC}
Denote $\C T_0(G,\C C)=\{[(e,h,f)]\in\C T_0:~ \widehat {A(e)}_{\C C}\neq 0\}.$
\end{definition}

\begin{theorem}\label{t:gen}
There is a surjective map
\begin{equation}\label{e:classC}
\Psi_c:\widehat{\wti W}_{\mathsf{gen}} \longrightarrow \C T_0(G,\C C),
\end{equation}
($\C T_0(G,\C C)$ is as in Definition \ref{d:solvC}) with the following properties:
\begin{enumerate}
\item If $\Psi(\wti\sigma)=[(e,h,f)]$, then 
\begin{equation}
\wti\sigma(\Omega_{\wti W,c})=\langle \pr_{\C
  C}(h),\pr_{\C C}(h)\rangle.
\end{equation}
where $\Omega_{\wti W,c}$ is as in (\ref{omWtilde}) and $\pr_{\C C}$
is as in (\ref{projt});
\item Let $(e,h,f)\in \C T_0(G,\CC)$ be given. For every  Springer representation $\sigma_{(e,\phi)}$,
  $\phi\in\widehat {A(e)}_{\C C}$, and every spin $\wti W$-module $S$,
  there exists $\wti \sigma\in \Psi^{-1}[(e,h,f)]$ such that $\wti\sigma$ appears with
  nonzero multiplicity in the tensor product
  $\sigma_{(e,\phi)}\otimes S$. Conversely, for every $\wti\sigma\in \Psi^{-1}[(e,h,f)]$, there exists a
  spin $\wti W$-module $S$ and a Springer representation
  $\sigma_{(e,\phi)}$, such that $\wti\sigma$ is contained in
  $\sigma_{(e,\phi)}\otimes S.$
\item If $e$ is distinguished, then properties (1) and (2) above
  determine a   bijection:
\begin{equation}\label{distC}
\Psi^{-1}([e,h,f])/_\sim \longleftrightarrow \{\sigma_{e,\phi}:
\phi\in \widehat {A(e)}_{\C C}\}.
\end{equation}
\end{enumerate}
\end{theorem}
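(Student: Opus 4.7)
The plan is to follow the same case-by-case strategy used for Theorem \ref{t:intro}, adjusting everything to account for the parameter function $c$ coming from the cuspidal datum $(L,\C C,\C L)$. By Lusztig's classification in \cite[\S2.13]{L3}, the Coxeter group $W=N_G(L)/L$ is always a Weyl group of classical type $B_m/C_m/D_m$ or one of the exceptional types $G_2, F_4, E_7, E_8$, and the values of $c:R^+\to\bZ$ are tabulated explicitly there. I will therefore treat the classical cases combinatorially and the exceptional cases by direct table construction.

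In the classical cases, the classification of $\widehat{\wti W}_{\mathsf{gen}}$ in Theorems \ref{t:Bn} and \ref{t:Dn} depends only on the abstract Coxeter structure of $W$, so the parameterization by partitions (or $\sim$-classes of partitions) is unchanged. Since each $\wti\sigma_\lambda$ is realized as $\sigma_{(\lambda,\emptyset)}\otimes S$, the character identity
\begin{equation*}
\frac{\tr_{\wti\sigma_\lambda}(\wti s_\al \wti s_\beta)}{\dim\wti\sigma_\lambda}=|\cos(\al,\beta)|\cdot\frac{\tr_{\sigma_\lambda}(s_\al s_\beta)}{\dim\sigma_\lambda}
\end{equation*}
carries over verbatim. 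Combining this with Frobenius' formula for $S_n$-characters, I expect $\wti\sigma_\lambda(\Omega_{\wti W,c})=4p_2(\lambda+\epsilon_c)$, where $\epsilon_c$ is a shift of content determined by the values of $c$ on the long and short simple roots. On the geometric side, for a nilpotent orbit in $\C T_0(G,\C C)$ I will check that the middle element $h$ of a Lie triple projects under $\pr_\C$ to a coordinate vector whose squared norm equals this expression; the precise match is the combinatorial content of the $S$-symbol formalism reviewed in \cite[\S4.4]{CK2}. This establishes (1). For (2), I plan to use the appropriate generalization of Slooten's algorithm to attach bipartitions $(\mu,\mu')$ to $\lambda$ and then reuse the induced-representation manipulation from the $B_n/C_n$ case of Theorem \ref{t:intro} to verify that $\sigma_{(\lambda,\emptyset)}\otimes\sigma_{(\mu,\mu')}$ contains an exterior power $\bigwedge^k V$.

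For the exceptional cases, the plan is to build tables analogous to Tables \ref{t:G2}--\ref{t:E8}, listing the orbits in $\C T_0(G,\C C)$, the generalized Springer representations $\sigma_{(e,\phi)}$ attached to them via \cite{L,L3}, the genuine $\wti W$-types with their $c$-weighted Casimir traces on pairs of reflections spanning an $A_2$, $B_2$ or $G_2$, and the tensor-product multiplicities with the spin modules. Character data for $\wti W$ comes from \cite{Mo2} and the tensor products can be computed in \emph{chevie}. The distinguished case (3) will then follow by comparing cardinalities, just as in the proof of Theorem \ref{t:intro}, using that the generalized Slooten-type algorithm returns a single bipartition precisely for distinguished orbits.

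I expect the principal obstacle to be the numerical match in (1): the content shift $\epsilon_c$ dictated by $c$ on the spectral side must agree with $\langle \pr_\C(h),\pr_\C(h)\rangle$ on the geometric side for each cuspidal datum. In the classical cases this reduces to a combinatorial identity generalizing the $\epsilon\in\{1/2,1\}$ calculation carried out in the body of the paper; in the exceptional cases it is a finite but nontrivial table check. A secondary difficulty will be identifying the fibers of $\Psi_c$ with the sets of $W$-types returned by the generalized Slooten-type algorithm, since this identification is what makes the tensor-product argument in (2) work.
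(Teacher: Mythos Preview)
Your proposal is essentially the same approach as the paper's: a case-by-case verification, treating the classical relative Weyl groups via the $\sigma_{(\lambda,\emptyset)}\otimes S$ realization, Frobenius-type content formulas, and Slooten's algorithm, and the exceptional ones via explicit tables built from \cite{Mo2} and \emph{chevie}. One correction to your case enumeration: for nontrivial cuspidal data the relative Weyl group $W=N_G(L)/L$ is only of type $B_n/C_n$, $G_2$ (from $2A_2\subset E_6$), or $F_4$ (from $(3A_1)'\subset E_7$); types $D_m$, $E_7$, $E_8$ do not occur, so your list of ``interesting'' cases is shorter than you anticipate.
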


\begin{proof}
Again the proof amounts to verifying the assertions in every case. The
interesting cases are when $R$ is of type $B_n/C_n$, $G_2$, or
$F_4.$

For type $G_2$, the cuspidal local system appears for the Levi
subgroup $L=2A_2$ in $G=E_6$. Assuming that the simple roots of $R$ of type
$G_2$ are $\al,\beta$ with $\al$ long and $\beta$ short, the
parameter function is $c(\al)=1$, $c(\beta)=3$. To give an explicit
formula for $\Omega_{\wti W,c}$, let's normalize $\al,\beta$ such that
such that $\langle\check\al,\check\al\rangle=2$ and
$\langle\check\beta,\check\beta\rangle=6$. Then we have
\begin{equation}
\frac 14\wti\sigma(\Omega_{\wti W,c})=\frac 32 (c(\al)^2+3 c(\beta)^2)+4\sqrt 3 c(\al) c(\beta)\frac{\tr_{\wti\sigma}(G_2)}{\dim\wti\sigma}+(c(\al)^2+3c(\beta)^2)\frac{\tr_{\wti\sigma}(A_2)}{\dim\wti\sigma};
\end{equation}
recall that the notation for conjugacy classes  is as in
\cite{Ca}. Then one can verify easily part (1) of Theorem
\ref{t:gen}. 

For type $F_4$, the cuspidal local system appears for the Levi
subgroup $L=(3A_1)'$ in $G=E_7$. Assuming that the simple roots of $R$
of type $F_4$ are $\al,\beta$ with $\al$ short and $\beta$ long, the
parameter function is $c(\al)=2$, $c(\beta)=1$. We normalize
$\al,\beta$ so that $\langle\check\al,\check\al\rangle=2$ and
$\langle\check\beta,\check\beta\rangle=1.$ Then we have:
\begin{equation}
\frac 14\wti\sigma(\Omega_{\wti W,c})=3(2c(\al)^2+c(\beta)^2)+16
c(\al)^2\frac{\tr_{\wti\sigma}(A_2)}{\dim\wti\sigma}+8
c(\beta)^2\frac{\tr_{\wti\sigma}(\wti A_2)}{\dim\wti\sigma}+18\sqrt 2c(\al)c(\beta)\frac{\tr_{\wti\sigma}(B_2)}{\dim\wti\sigma}.
\end{equation}

The correspondence from Theorem \ref{t:gen} for $G_2$ and $F_4$ is listed in Tables
\ref{t:G2C} and \ref{t:F4C}. The explicit values of $\pr_{\C C}(h)$ in coordinates, for these cases, can be found in \cite[Tables 1 and 2]{Ci}.

\begin{table}[h]
\caption{$2A_2$ in $E_6$, $W=G_2$.\label{t:G2C}}
\begin{tabular}{|c|c|c|}
\hline
Nilpotent $e\in \C N_{0,C}$ &$\sigma_{e,\phi}\in \widehat W$
&$\wti\sigma\in \widehat{\wti W}_{\mathsf{gen}}$ \\
\hline
$E_6$ &$(1,0)$ &$2_s$ \\

\hline
$E_6(a_1)$          &$(1,3)'$ &$2_{ss}$ \\
\hline
$E_6(a_3)$ &$(2,1)$ &$2_{sss}$ \\
\hline
\end{tabular}
\end{table}

\begin{table}[h]
\caption{$(3A_1)'$ in $E_7$,  $W=F_4$.\label{t:F4C}}
\begin{tabular}{|c|c|c|}
\hline
Nilpotent $e\in \C N_{0,C}$ &$\sigma_{e,\phi}\in \widehat W$
&$\wti\sigma\in\widehat{\wti W}_{\mathsf{gen}} $ \\
\hline
$E_7$ &$(1,0)$ &$4_s$\\
\hline 
$E_7(a_1)$         &$(2,4)''$  &$8_{sss}$ \\  
\hline
$E_7(a_2)$ &$(4,1)$ &$12_s$ \\
\hline
 
$E_7(a_3)$ &$(8,3)'$ &$24_s$ \\  
&$(1,12)'$ &$4_{ss}$\\
\hline
$E_7(a_4)$          &$(2,4)'$  &$8_{ssss}$ \\
         &$(4,7)'$ &$12_{ss}$ \\ 
\hline
$E_7(a_5)$&$(12,4)$ &$8_{ss}$ \\     
&$(6,6)''$ &$8_{s}$ \\
\hline
 
\end{tabular}
\end{table}

For types $B_n$ and $C_n$, the combinatorics is similar to that in the
proof of Theorem \ref{t:intro}. More precisely, assume the notation from Theorem \ref{t:Bn} and the discussion following it. Assume also
that the roots of type $B_n$ and $C_n$ are in the standard Bourbaki coordinates. For a partition $\lambda$ of $n$, viewed as a left justified Young tableau, define the content of the box $(i,j)$ with parameters $c_1,c_2$ to be the number $m_c(i,j):=c_1(i-j)+c_2.$ Let us denote by $p_2(\lambda,c_1,c_2)$ the sum of squares of contents of boxes for $\lambda$ and parameters $c_1,c_2$.  The same computation as after Theorem \ref{t:Bn} shows that
\begin{equation}
\begin{aligned}
&\wti\sigma_\lambda(\Omega_{\wti W,c})=4p_2(\lambda,c(\ep_1-\ep_2),c(\ep_n)),\text{ for type $B_n$},\\
& \wti\sigma_\lambda(\Omega_{\wti W,c})=4p_2(\lambda,c(\ep_1-\ep_2),\frac 12c(2\ep_n)),\text{ for type $C_n$}.
\end{aligned}
\end{equation}
Notice that $\wti\sigma_\lambda(\Omega_{\wti W,c})$ for $C_n$ is identical with $\wti\sigma_\lambda(\Omega_{\wti W,c})$ for $B_n$ if we set $c(2\ep_n)=2 c(\ep_n).$ The geometric values of the parameters are as follows (\S2.13 in \cite{L3}):

\begin{enumerate}
\item $\fg=sp(2k+2n)$, $\fk l=sp(2k)\oplus\bC^n$, $\C C=(2,4,\dots,2p)\oplus 0$, $k=p(p+1)/2$, $\Phi=B_n$, $c(\ep_1-\ep_2)=2$, $c(\ep_n)=2p+1$;
\item $\fg=so(k+2n)$, $\fk l=so(k)\oplus\bC^n$, $\C C=(1,3,\dots,2p+1)\oplus 0$, $k=p^2$, $\Phi=B_n$, $c(\ep_1-\ep_2)=2$, $c(\ep_n)=2p+2$;
\item $\fg=so(k+4n)$, $\fk l=so(k)\oplus sl(2)^n\oplus \bC^n$, $\C C=(1,5,9,\dots, 4p+1)\oplus (2)^n\oplus 0$, $k=(p+1)(2p+1)$, $c(\ep_1-\ep_2)=4$, $c(\ep_n)=4p+3$;
\item $\fg=so(k+4n)$, $\fk l=so(k)\oplus sl(2)^n\oplus \bC^n$, $\C C=(3,7,11,\dots, 4p+3)\oplus (2)^n\oplus 0$, $k=(p+1)(2p+3)$, $c(\ep_1-\ep_2)=4$, $c(\ep_n)=4p+5$.
\end{enumerate}

For the partition $\lambda$ with content $m_c(i,j)$, Slooten's algorithm is the same as in the case $c\equiv 1$. Also the analysis of the tensor product decomposition works in the same way as in the proof of Theorem \ref{t:intro} for $B_n$ and $c\equiv 1$. (The argument there only uses Slooten's algorithm and the Weyl group of type $B_n$, and not the parameter function $c$.)  The algorithm giving the nilpotent element $e\in \fg$ from the partition $\lambda$ with content $m_c(i,j)$, at geometric values of $c$, is again very similar to the one in \cite[\S4.4]{CK2}, and we skip the details.

\end{proof}

%%%%%%%%%%%%%%%%%%%%%%%%%%%%%%%%%%%%%%%%%%%%%%%%%%%%%%%%%%%%%55
\ifx\undefined\bysame
\newcommand{\bysame}{\leavevmode\hbox to3em{\hrulefill}\,}
\fi

\end{document}